\newtheorem{theorem}{Theorem}[section]
\newtheorem{lemma}[theorem]{Lemma}
\newtheorem{proposition}[theorem]{Proposition}
\theoremstyle{definition}
\newtheorem{definition}[theorem]{Definition}
\newtheorem{remark}[theorem]{Remark}
\newtheorem{example}[theorem]{Example}
\numberwithin{equation}{section}
\newcommand{\calG}{\mathcal{G}}
\newcommand{\calA}{\mathcal{A}}
\newcommand{\calE}{\mathcal{E}}
\newcommand{\calB}{\mathcal{B}}
\newcommand{\calV}{\mathcal{V}}
\newcommand{\calH}{\mathcal{H}}
\newcommand{\calN}{\mathcal{N}}
\newcommand{\E}{\operatorname{\mathds{E}}} 
\renewcommand{\P}{\operatorname{\mathds{P}}} 
\newcommand{\V}{{\mathds{V}}}
\newcommand{\Z}{{\mathbb Z}}
\newcommand{\prt}{\partial}
\newcommand{\lra}{\leftrightarrow}
\newcommand{\bfG}{\mathbf{G}}
\newcommand{\ol}{\overline}
\newcommand{\wt}{\widetilde}
\DeclareMathOperator{\dist}{dist}
\def\bone{{\bf 1}}
\def\n{{\bf n}}
\renewcommand{\comment}[1]{}
\newcommand{\eq}{\begin{equation}}
\newcommand{\en}{\end{equation}}
\newcommand{\tree}{\mathcal{G}}
\title{Twin peaks}
\author{Krzysztof Burdzy and Soumik Pal}
\address{ Department of Mathematics, Box 354350, University of Washington, Seattle, WA 98195}
\email{burdzy@uw.edu}
\email{soumik@uw.edu}
\thanks{Research supported in part by NSF Grants DMS-1206276 for Burdzy and DMS-1308340 for Pal. }
\begin{document}

\begin{abstract}
We study random labelings of graphs conditioned on a small number (typically one or two) peaks, i.e., local maxima. We show that the boundaries of level sets of a random labeling of a square with a single peak have dimension 2, in a suitable asymptotic sense. The gradient line of a random labeling of a long ladder graph conditioned on a single peak consists mostly of straight line segments. We show that for some tree-graphs, if a random labeling is conditioned on exactly two peaks then the peaks can be very close to each other. We also study random labelings of regular trees conditioned on having exactly two peaks. Our results suggest that the top peak is likely to be at the root and the second peak is equally likely, more or less, to be any vertex not adjacent to the root.

\end{abstract}

\maketitle

\section{Introduction}\label{intro}

This paper is the first part of a project motivated by our desire to understand random labelings of graphs conditioned on having a small number of peaks (local maxima). 
The roots of this project lie in the paper \cite{BBS}, where some new results on peaks of random permutations were proved. These were later applied in \cite{BBPS}.

To indicate the direction of our thinking, we now state informally an open problem
that was the target of our initial investigation. 
Consider a discrete $2$-dimensional torus with $N$ vertices and let $L$ be a random labeling of the 
vertices of the torus with integers $1,2, \dots, N$. If we condition   $L$ on having exactly two peaks and we call their locations $K_1$ and $K_2$, is it true that $\dist(K_1,K_2)/\sqrt{N} \to 0$ in probability as $N\to \infty$?
We conjecture that the answer is ``no.''

We are grateful to Nicolas Curien for pointing out to us the following related articles. Local maxima of labelings are analyzed in \cite{AmBudd}. The labels (ages) of vertices in a random tree generated according to the Barab\'asi and Albert
``preferential attachment model'' (see \cite[Chap.~8]{VDH}), conditioned on a given tree structure, have the distribution of a random labeling conditioned on having a single peak at the root. These research directions are significantly different from ours.

The paper is organized as follows. 
It starts with Section 
\ref{sec:model} presenting a comparison between a $1$-dimensional model and a $2$-dimensional model, and another comparison of two $2$-dimensional models, as a further motivation for our study. 
Section
\ref{prel} contains some preliminaries (notation and basic formulas).  In Sections \ref{rough} and \ref{ladder} we will prove two results on random labelings of tori inspired by  simulations. Section \ref{tree} will present results on random labelings of trees. One can prove more precise results for random labelings of \emph{regular} trees---these will be given in Section \ref{regtrees}. 

\section{Models and simulations}\label{sec:model}

We will define a random labeling of $N$ vertices of $\Z^d$ and a random labeling of $d$-dimensional torus with $N$ vertices. The two algorithms  generate similarly looking random labelings in dimension $d=1$, but computer simulations and some rigorous results (to be presented later in this paper) show that the two algorithms generate strikingly different random labelings in $\Z^2$.

(i) Fix an integer $N>0$. We will define inductively a random  function
$\wt L: \Z^d \to \{0,1,2,\dots, N\}$ such that the origin is mapped onto $N$ and $\wt L^{-1} (j)$ is a singleton for $1 \leq j \leq N$. Let $\wt C_k$  be the inverse image of $\{N, N-1, \dots, N-k\}$ by $\wt L$  for $0 \leq k \leq N-1$. Given $\wt C_k$ for some $0\leq k \leq N-2$, the vertex $\wt L^{-1} (N-k-1)$ is distributed uniformly among all vertices in $\Z^d \setminus \wt C_k$ which  have a neighbor in $\wt C_k$. 
We let $\wt L(x)=0$ for all $x\notin \wt C_{N-1}$.
It is easy to see that  the only peak (strict local maximum) of $\wt L$ is at $\wt L^{-1} (N)$.

(ii) Suppose that $N_1 >0$,  $N = N_1^d$ and let $\calG$ be the $d$-dimensional torus with side length $N_1$ and vertex set $\calV$. Let $ L: \calV \to \{1,2,\dots, N\}$ 
be a random (uniformly chosen) bijection conditioned on  $N$ being the only peak.
Let $C_k$ be the inverse image of $\{N, N-1, \dots, N-k\}$ by $L$ for $0 \leq k \leq N-1$. 

Note that  both $\wt C_k$ and $C_k$ are connected for every $0 \leq k \leq N-1$. We also have $\wt C_{k} \subset \wt C_{k+1}$ and $C_{k} \subset C_{k+1}$ for all $k$.

Methods applied in \cite{BBS} and other standard techniques show that
in dimension $d=1$, for large $N$ and $k$ comparable with $N$, both $\wt C_k$ and $C_k$ are intervals centered, more or less, at the vertex labeled $N$, with high probability.

In dimension $d=2$, the cluster growth process $\{\wt C_k, k\geq 1\}$ is the Eden model, \cite{Eden}. See Fig.~\ref{fig3}. Convincing heuristic arguments (see Remark \ref{m30.1}) show that the boundary of $\wt C_k$ has the size of the order $k^{1/2}$.  

\begin{figure} \includegraphics[width=0.9\linewidth]{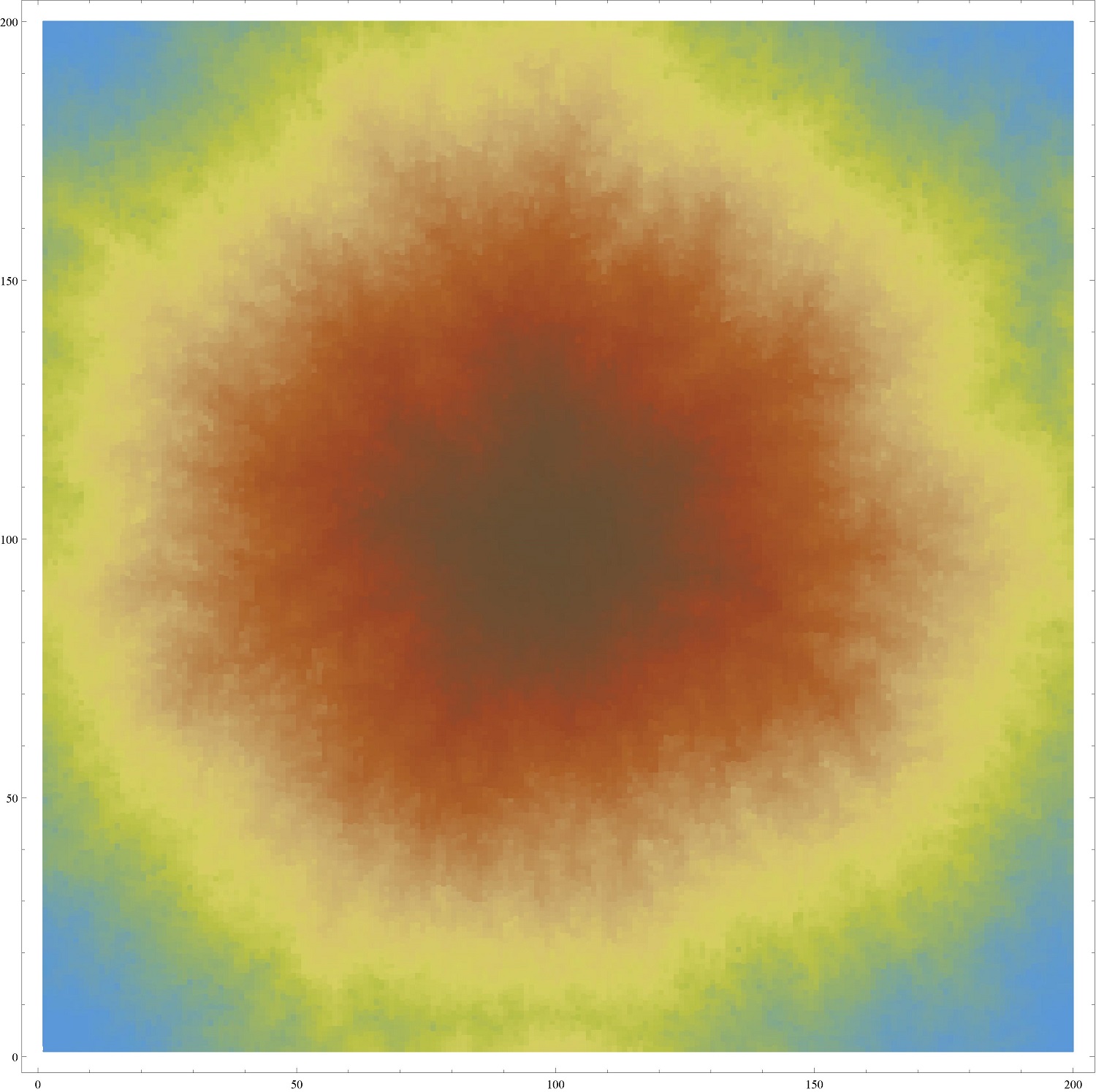}
\caption{
Eden model on the discrete torus with edge length 200. The original cluster was a single point at $(100,100)$. The order of point attachment to the cluster is indicated by the warmth of the color.
}
\label{fig3}
\end{figure}

The contrast between Figs.~\ref{fig3} and \ref{fig2} could not be more dramatic. Fig.~\ref{fig2} is a simulation of the process $\{ C_k, k\geq 1\}$ in two dimensions. We will show that the boundaries of the sets $C_k$ have dimension 2, in an asymptotic sense.

\begin{figure} \includegraphics[width=0.9\linewidth]{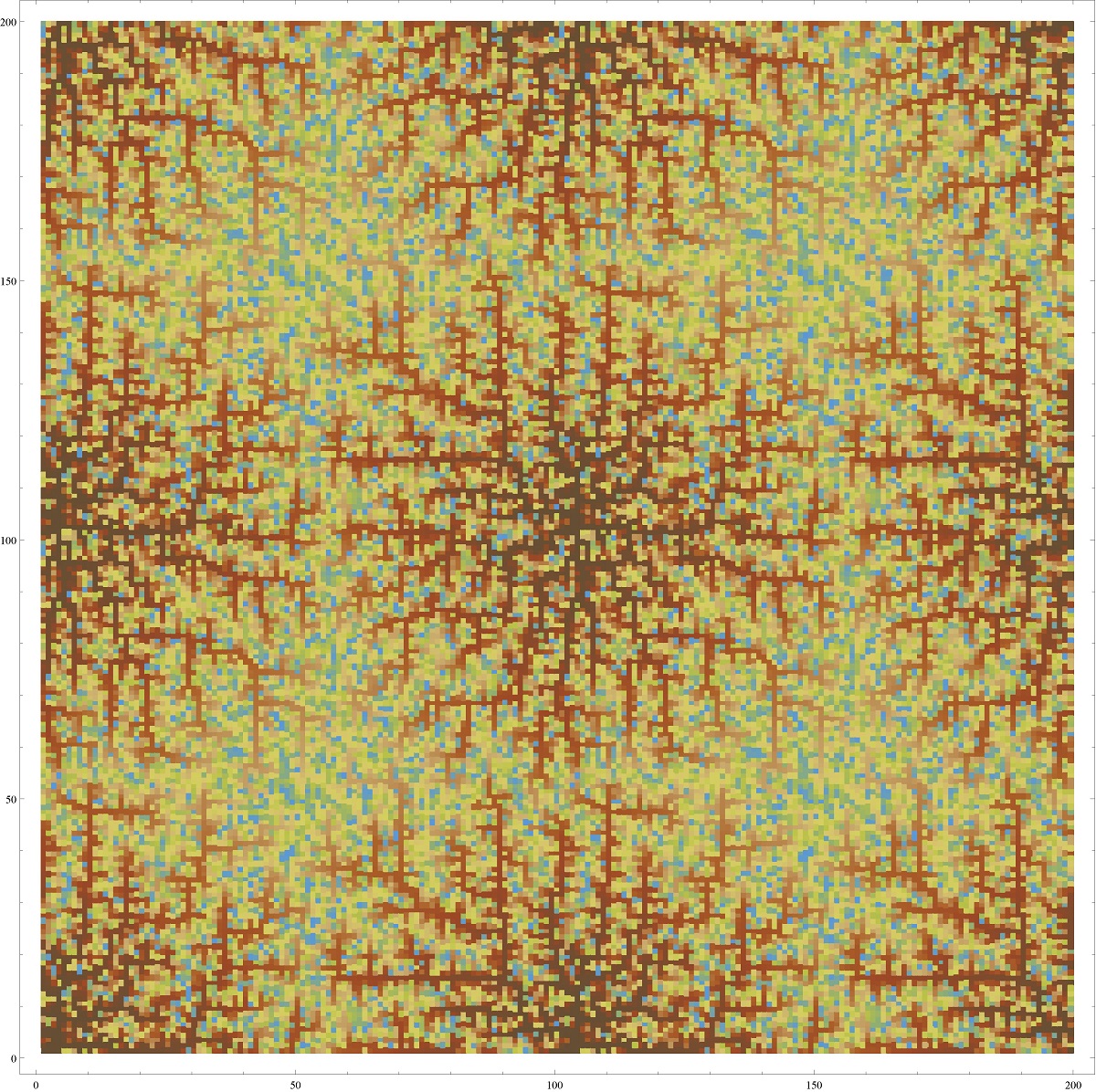}
\caption{
Random labeling of the discrete torus with edge length 100. The labeling is conditioned on having exactly one peak. Level lines are indicated by the warmth of the color. The figure consists of four copies of the same simulation, placed next to each other, to help see the main geometric features.
}
\label{fig2}
\end{figure}

The following elementary example shows that 
models (i) and (ii) are not equivalent for general  graphs. In other words, the sets $C_k$ associated with the (uniformly chosen) random labeling with a single peak do not grow by attaching vertices uniformly on the boundary.

\begin{example}\label{j15.2}

Consider a graph with vertices $\{a,b,c,d\}$ and edges $\{\ol{ab},\ol{bc},\ol{cd}\}$ (a linear graph, with $a,b,c$ and $d$ arranged along a line in this order). Let $L$ be the uniformly random labeling conditioned on having a single peak at $b$. 
Let $R = L^{-1}$ be the inverse function of $L$.
The following three realizations of $L$ are possible and equally likely.
\begin{align*}
L(a) = 3, L(b) = 4, L(c) = 2, L(d) = 1;\\
L(a) = 2, L(b) = 4, L(c) = 3, L(d) = 1;\\
L(a) = 1, L(b) = 4, L(c) = 3, L(d) = 2.
\end{align*}
We see that $R(3)$ can be equal to $a$ or $c$ but 
\begin{align*}
1/3 =
\P(R(3) = a \mid R(4) = b) \ne \P(R(3) = c \mid R(4) = b) =2/3.
\end{align*}
In other words, if $R(3)$ is chosen uniformly from all vertices adjacent to $R(4)$ and this is followed by any choice of $R(2)$ and $R(1)$ consistent with the condition that $R(4)=b$ is the only peak of $L$ then the resulting labeling is not uniformly random (conditional on  $\{R(4)=b\}$).

\end{example}

\section{Preliminaries}\label{prel}

For an integer $N>0$, let $[N] =\{1,2,\dots, N\}$.
By an abuse of notation, we will use $\lvert \cdot \rvert$  to denote the absolute value of a real number and cardinality of a finite set.

We will usually denote graphs by $\calG$, their vertex sets by $\calV$ and edge sets by $\calE$.
We will indicate adjacency of vertices $x$ and $y$ by $x\lra y$.

Suppose that  $1<|\calV|= N<\infty$.  We will call a function $L: \calV \to [N]$ a labeling if it is a bijection. 
 
We will say that a vertex $x\in \calV$ is a peak (of $L$) if and only if it is a local maximum of $L$, i.e., $L(x) > L(y)$ for all 
$y\lra x$. The event that the number of peaks of a random labeling  of graph $\calG$ is equal to $k$ will be denoted $\calN(\calG,k)$.

We will use $\P$ to denote the distribution of a random (uniform, unconditioned) labeling. The symbol $\P_1$ ($\P_2$) will stand for the distribution $\P$ conditioned on existence of exactly one peak (exactly two peaks). If an argument involves multiple graphs, we may indicate the graph in the notation by writing, for example, $\P_{\calG}$ or $ \P_{1, \calG}$.

Recall the Stirling formula for $n\geq 1$, 
\begin{align}\label{f15.1}
\log n! = n \log n - n + O(\log n).
\end{align}
This obviously implies that,
\begin{align}\label{j20.1}
\log n! \leq n \log n  + O(\log n).
\end{align}
A more accurate version of the Stirling formula says that, for $n\geq 1$,
\begin{align}\label{ap27.1}
(n+1/2) \log n - n + \log \sqrt{2 \pi} \leq
\log n! \leq (n+1/2) \log n - n + 1.
\end{align}

For every $k=0,1,\dots, n$, we have
$\binom{n}{k} \le \sum_{j=0}^n \binom{n}{j}=2^n$,
so
\begin{align}\label{ap28.1}
\log \binom{n}{k} &\leq  n \log 2.
\end{align}

\section{Roughness of level boundaries}\label{rough}

Recall notation from Section \ref{sec:model}. We will show that  boundaries of  clusters $C_k$ (``level sets'') for random labelings of two-dimensional tori, conditioned on having a single peak, have dimension 2, in a suitable asymptotic sense.

\begin{remark}\label{m30.1}

Recall the Eden model, a sequence of randomly growing clusters $\wt C_k$, from Section \ref{sec:model}.
The  model was introduced in \cite{Eden}. The site version of the Eden model
is identical to the site version of the ``first passage percolation'' model
introduced in \cite{HW}. A shape theorem was proved in \cite{CD} for the
edge version of the model; it says that when the size of the cluster
goes to infinity, the rescaled cluster converges to a convex set $B$.
For a connection between the Eden model and the KPZ  equation, see, e.g., \cite{Quast}.

Theorem 3.1 of \cite{Alex} (see also a stronger version in \cite[Prop.~3.1]{DK}) implies that with probability 1, for all large $n$, the boundary of the set $\wt C_n$ lies in the ``annulus'' of width $c _1n^{1/4}\log n$ containing $n^{1/2}\prt B$ or, more precisely, a neighborhood of width $c_1 n^{1/4}\log n$ of the boundary of $n^{1/2}B$. This implies that the cardinality of the boundary of $\wt C_n$ is bounded by the volume of the annulus, that is,
$c_2 n^{3/4} \log n$. 

The following conjecture was communicated to us by Michael Damron.  For some constant $c_3$, with probability one, there exist infinitely many $n$ such that the boundary of $\wt C_n$ contains at most $c_3 n^{1/2}$ vertices. 

Even the weaker upper bound, $c_2 n^{3/4} \log n$, on the size of cluster boundaries in the Eden model
and Theorem \ref{f12.3} show that the clusters $C_k$ in the random labeling model have much different character from those in the Eden model. The comparison of Figs.~\ref{fig3} and \ref{fig2} seems to support this claim.
\end{remark}

Recall that $\calG_{n,n}$ denotes the Cartesian product 
of path (linear) graphs, each with $n$ vertices.
In other words, $\calG_{n,n}$ is a discrete square with edge length $n$.
The set of vertices of $\calG_{n,n}$ is denoted $\calV_{n,n}$.

Recall clusters $C_k$ defined in Section \ref{sec:model}. 
Let $\prt C_k$ denote the set of vertices outside $C_k$ that are adjacent to a vertex in $C_k$.

\begin{theorem}\label{f12.3}
Consider a random labeling of $\calG_{n,n}$ conditioned on having a single peak.
 Let $f(n) = (\log n )^{-1/2}\log\log n$. Let $F$ be the event that the number of $k\in [n^2]$ such that $|\prt C_k| \leq  n^{2- f(n)}$ is greater than or equal to  $f(n)n^2$. For large $n$,
\begin{align*}
\P_1(F) \leq \exp(- (1/2)n^2 (\log\log n)^2).
\end{align*}
\end{theorem}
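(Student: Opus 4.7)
The plan is a counting argument that reduces the theorem to an upper bound on $|F \cap \calN(\calG_{n,n},1)|$ and a matching lower bound on $|\calN(\calG_{n,n},1)|$. The key reformulation: each single-peak labeling is uniquely encoded by the decreasing inverse sequence $(v_0, \ldots, v_{N-1}) = (L^{-1}(N), \ldots, L^{-1}(1))$, where $N = n^2$, and the single-peak condition forces $v_{i+1} \in \partial C_i$ for every $i = 0, \ldots, N-2$; otherwise $v_{i+1}$ would itself be a second peak. Under $\P_1$ every valid sequence carries equal probability, so $\P_1(F) = |F \cap \calN(\calG_{n,n},1)|/|\calN(\calG_{n,n},1)|$ and the problem becomes purely combinatorial.

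Set $s := \lceil f(n)n^2\rceil$ and $B := n^{2-f(n)}$. I will cover $F$ by the union, over subsets $S \subseteq \{0, 1, \ldots, N-2\}$ of size $s$, of the events $\{|\partial C_k|\leq B\text{ for all }k \in S\}$. Sequential growth with the crude uniform bound $|\partial C_k| \leq 4N$ on $\calG_{n,n}$ gives at most $N \cdot B^s \cdot (4N)^{N-1-s}$ single-peak sequences per $S$, and there are at most $2^{N-1}$ choices of $S$ by \eqref{ap28.1}. Taking logarithms, the key cancellation $s(\log B - \log N) = -(f(n)^2/2) N\log N$, together with the identity $f(n)^2 \log N = 2(\log\log n)^2$ (which uses $\log N = 2\log n$), yields
\begin{align*}
\log |F \cap \calN(\calG_{n,n},1)| \leq N \log N - N (\log\log n)^2 + O(N).
\end{align*}
For the matching lower bound it suffices to prove $\log |\calN(\calG_{n,n},1)| \geq N \log N - O(N)$, since $N(\log\log n)^2 \gg N$. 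I will fix a rooted spanning tree $T$ of $\calG_{n,n}$ and count its linear extensions: any ordering of the vertices in which every non-root vertex follows its tree-parent produces a valid single-peak sequence, because the parent is a grid neighbor already present in the preceding cluster. The classical count of extensions is $N!/\prod_v s_T(v)$, where $s_T(v)$ is the size of the subtree rooted at $v$. Building $T$ by a recursive separator decomposition of the grid (repeatedly splitting into two near-equal halves by removing a row or column and recursing in each half) gives $\sum_v \log s_T(v) = O(N)$ by the standard balanced-tree analysis, hence $|\calN(\calG_{n,n},1)| \geq N!/e^{O(N)}$ and, via \eqref{ap27.1}, $\log|\calN(\calG_{n,n},1)| \geq N\log N - O(N)$.

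Combining the two estimates yields $\log \P_1(F) \leq -N(\log\log n)^2 + O(N)$, which is bounded by $-(1/2) n^2 (\log\log n)^2$ for all large $n$ because $(\log\log n)^2 \to \infty$. The main obstacle is the lower bound on $|\calN(\calG_{n,n},1)|$: while the upper bound is a clean union-bound calculation, one needs the full $N!/e^{O(N)}$ here, rather than the much weaker $e^{O(N)}$ one would get from merely counting Hamiltonian paths. The balanced-tree construction supplies this, and in fact any spanning tree $T$ with $\sum_v \log s_T(v) = o(N(\log\log n)^2)$ would suffice.
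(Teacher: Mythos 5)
Your proposal is correct, and the upper bound step is essentially identical to the paper's: encode a single-peak labeling by its cluster-growth sequence $v_{i+1}\in\prt C_i$, cover $F$ by $\binom{N-1}{s}\le 2^{N-1}$ choices of a set $S$ of ``bad'' times, and pay $n^{2-f(n)}$ per step in $S$ and (at most) $n^2$ per step outside $S$; the cancellation $s\,f(n)\log n \approx n^2(\log\log n)^2$ is exactly what the paper extracts. Where you diverge is the lower bound on $|\calN(\calG_{n,n},1)|$. The paper builds a skeleton sub-lattice $\calH_n$ with mesh $\ell=\lfloor\log n\rfloor$, places the top $|\calV^\calH_n|$ labels on $\calH_n$ in a peak-free way, and then shows that at least a $1/((\ell-1)!)$-fraction of each short complementary horizontal segment's permutations is peak-free, yielding $\exp(2n^2\log n - O(n^2\log\log n))$. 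You instead fix a rooted spanning tree $T$, observe that any labeling that decreases along every tree edge has the root as its unique peak, invoke the hook-length count $N!/\prod_v s_T(v)$, and choose $T$ by recursive row/column bisection so that $\sum_v\log s_T(v)=O(N)$, giving the sharper bound $\exp(2n^2\log n - O(n^2))$. Either estimate closes the argument since the numerator deficit is $N(\log\log n)^2$, which dominates both $O(N)$ and $O(N\log\log n)$. Your route is structurally cleaner (a known enumerative formula plus a standard separator tree) and yields a stronger constant in the lower bound, while the paper's is more elementary and self-contained; the one place you gesture rather than prove is the claim $\sum_v\log s_T(v)=O(N)$ for the bisection tree, which is true (separator sizes $\sim\sqrt{N/2^i}$ and subtree sizes $\lesssim N/2^i$ at recursion depth $i$ give a convergent geometric sum) but deserves a few lines if this were written out in full.
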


\begin{proof}

\noindent{\it Step 1}.
In this step, we will prove that
for
any $1\leq m_1, m_2 \leq n$, the number of labelings
of $\calG_{n,n}$ which have only  one peak at $(m_1, m_2)$ is greater than
\begin{align}\label{f12.1}
\exp\left(2 n^2 \log n + o(n^2 \log n)\right).
\end{align}

Let $\ell= \ell(n) = \lfloor \log n \rfloor$.
Let $\calH_n$ be the graph whose vertex set
is a subset of $\calV_{n,n}$ and contains all
points  with coordinates
$(k_1, k_2)$ satisfying at least one of the following four conditions: (i) $k_1 = m_1$, (ii) $k_2 = m_2$,
(iii) $k_1 = j \ell$ for some integer $j$,
(iv) $k_2 = j \ell$ for some integer $j$.
Two vertices of $\calH_n$ are connected by an edge if and only
if they are adjacent in $\calG_{n,n}$.
The graph $\calH_n$ has the shape of a lattice with the edge length
$\ell$ and a pair of extra lines passing through $(m_1, m_2)$.

Let $N$ be the number of vertices in $\calH_n$. We have
$N \leq 3 n^2/ \log n$ for large $n$. Let $\calV^\calH_n$ denote
the set of vertices of $\calH_n$.

We will define a family of deterministic labelings $L$ of $\calG_{n,n}$.
We will first label $\calV^\calH_n$.
It is easy to see that one can label
$\calV^\calH_n$ in such a way that $L(m_1,m_2) = n^2$,
$(m_1,m_2)$ is the only peak of $L$, and 
$L(\calV^\calH_n) = \{n^2, n^2-1, \dots, n^2 - N+1\}$.
Fix any such labeling of $\calV^\calH_n$ and consider
labelings $L$ of $\calV_{n,n}$ which agree with this labeling
on $\calV^\calH_n$.

Consider labelings with no restriction on peaks outside
$\calV^\calH_n$. There are $n^2-N$ numbers in $[n^2]$ not used for labeling $\calV^\calH_n$ so there
are $(n^2-N)!$  labelings of $\calG_{n,n}$ which extend a given labeling of $\calV^\calH_n$. Let this family be called
$\calA$. 

Consider a maximal horizontal line segment $\Gamma \subset \calV_{n,n} \setminus \calV^\calH_n$.
The length of $\Gamma$ is at most $\ell -1$. At least one of the permutations of $L(\Gamma)$ is monotone  with the largest number adjacent to $\calV^\calH_n$.
So there are at least $(n^2-N)! /(\ell-1)!$ labelings in $\calA$ with no peaks on $\Gamma$. Let $\Gamma_1$ be  a maximal horizontal line segment in $ \calV_{n,n} \setminus \calV^\calH_n$, disjoint from $\Gamma$. We can apply the same argument using this time permutations of  $L(\Gamma_1)$ to see that the number of labelings in $\calA$  with no peaks on $\Gamma \cup \Gamma_1$ is greater than or equal to $(n^2-N)! ((\ell-1)!)^{-2}$. The set $ \calV_{n,n} \setminus \calV^\calH_n$ can be partitioned into at most $n(n+1)/\ell \leq  (n^2 /\log n) (1+1/n)(1+ 2/\log n)$ disjoint maximal horizontal line segments of length at most $\ell$, for large $n$. So 
the number of labelings in $\calA$  with no peaks  is greater than or equal to $(n^2-N)! ((\ell-1)!)^{-(n^2 /\log n) (1+1/n)(1+ 2/\log n)}$.
For large $n$, the logarithm of this number is greater than or equal to, using \eqref{ap27.1}, 
\begin{align*}
&(n^2 - N +1/2) \log (n^2 -N) - (n^2 - N) + c_1 \\
&\quad - (n^2 /\log n) (1+1/n)(1+ 2/\log n)(\ell-1+1/2) \log (\ell -1)\\
& \geq (n^2 - 3 n^2/ \log n +1/2) \log (n^2 -3 n^2/ \log n) - n^2  \\
&\quad - (n^2 /\log n) (1+1/n)(1+ 2/\log n)\log n \log \log n \\
&\geq 2n^2(1 - 3/ \log n) \log n 
+n^2(1 - 3/ \log n)\log(1 - 3/ \log n) -n^2 \\
&\quad - n^2 (1+1/n)(1+ 2/\log n) \log \log n\\
& \geq 2 n^2 \log n + O(n^2 \log\log n).
\end{align*}
This proves \eqref{f12.1}.

\medskip

\noindent{\it Step 2}.
Suppose that $B_1, B_2, \dots, B_{n^2}$ is a  sequence of sets in $\calV_{n,n}$, starting with a singleton $B_1$ and such that $|B_k|=|B_{k-1}|+1$ for all $k$. 
Let $\prt B_k$ denote the set of vertices outside $B_k$ that are adjacent to a vertex in $B_k$. We also require that $B_{k+1} \setminus B_k \subset \prt B_k$ for all $k$.
Recall  that $f(n) = (\log n )^{-1/2}\log\log n$. Let $M$ be the number
of  sequences $(B_k)_{k\in[n^2]}$ with the property  that the number $R$ of $k\in [n^2]$ such that $|\prt B_k| \leq  n^{2- f(n)}$ is greater than or equal to  $f(n)n^2$.

The number of possible values of $R$ is bounded by $n^2$.
Given $R\geq f(n)n^2$, the number of possible sequences of $k$'s such that $|\prt B_k| \leq  n^{2- f(n)}$ is bounded by $\binom{n^2}{R}$. 
Given a specific set $\Lambda$ of $k$'s in $[n^2]$ with cardinality greater than or equal to  $f(n)n^2$, the number
of  sequences $(B_k)_{k\in[n^2]}$ with the property  that  $|\prt B_k| \leq  n^{2- f(n)}$ for all $k \in \Lambda$ is less than or equal to  
\begin{align*}
 (n^{2-f(n)})^{f(n)n^2} (n^2)^{(1-f(n))n^2+1},
\end{align*}
so
\begin{align*}
M \leq n^2 \binom{n^2}{R} (n^{2-f(n)})^{f(n)n^2} (n^2)^{(1-f(n))n^2+1}.
\end{align*}
By \eqref{ap28.1}, for some $c$ and all $n$ and $R$, $\log\binom{n^2}{R} \leq c n^2$. Hence,
\begin{align*}
\log M &\leq 
2 \log n + c n^2 
 +f(n)n^2 (2-f(n)) \log n + 2 ((1-f(n))n^2+1) \log n \\
&= c n^2 - n^2 f^2(n) \log n + 2 n^2 \log n + 4 \log n\\
&=c n^2  - n^2 (\log n )^{-1}(\log\log n)^2 \log n + 2 n^2 \log n + 4 \log n\\
& =c n^2 - n^2 (\log\log n)^2  + 2 n^2 \log n + 4 \log n.
\end{align*}
The probability of $F$ is less than or equal to $M$ divided by the number in \eqref{f12.1}, so for some $c$ this probability is bounded above by
\begin{align*}
&\frac{\exp(c n^2 - n^2 (\log\log n)^2  + 2 n^2 \log n + 4 \log n)}
{\exp(2 n^2 \log n - cn^2 \log\log n)}\\
&\quad= \exp(- n^2 (\log\log n)^2 +cn^2  +4 \log n + cn^2 \log\log n)\\
&\quad\leq \exp(- (1/2)n^2 (\log\log n)^2), 
\end{align*}
for all large $n$.
\end{proof}

\section{Ladder graphs}\label{ladder}

The results of a simulation presented in Fig.~\ref{fig2} suggest that  gradient lines for the random labeling contain long straight stretches. We cannot prove this feature of random labelings of tori but we will prove a similar result for ``ladder'' graphs. Theorem \ref{f14.1} will show that the longest of all gradient lines of a random labeling with a single peak of a ladder graph mostly consists of straight stretches, with the exception of a logarithmically small percentage of steps.

A generalized ladder graph $\calG_{m,n}$ is the Cartesian product
of path (linear) graphs with $m$ and $n$ vertices. 
In other words, $\calG_{m,n}$ is a discrete rectangle with edge lengths $m$ and $n$.
We will consider $m$  a fixed parameter and we will state an asymptotic theorem when $n$ goes to infinity.

We will identify the set of vertices $\calV_{m,n}$ of the graph $\calG_{m,n}$ with points of $\Z^2$, so that we can refer to them using Cartesian coordinates 
$(j,k)$, $1\leq j \leq n$, $1\leq k \leq m$.
Let $\calB$ be the family of all (deterministic) labelings of $\calG_{m,n}$ with only one peak at $(1,1)$. 
Let $\Lambda$ be the set of all ``continuous'' 
non-self intersecting paths taking values in $\calV_{m,n}$, i.e., $\lambda\in \Lambda$ if and only if there is some $k$ such that $\lambda: [k] \to \calV_{m,n}$, $\lambda(j) \lra \lambda(j+1)$
for $j\in [k-1]$, and $\lambda(i) \ne \lambda(j)$ for $i\ne j$. We will write $|\lambda|=k$. 

We will argue that for any $L\in\calB$ there exists $\lambda_L = (\lambda_L^1, \lambda_L^2) \in \Lambda$ such that 
 $\lambda_L(1) = (1,1)$, $\lambda_L^1(|\lambda_L|) = n$, and $j\to L(\lambda_L(j))$ is a decreasing function.
Fix any ordering of $\calV_{m,n}$.  We will construct a tree. Let $v_1=(1,1)$ be the root of the tree. 
Suppose that vertices $\{v_1, v_2, \dots,v_j\}$ of the tree have been chosen. Consider the following cases. 
(a) The set $\calV_{m,n}\setminus \{v_1, v_2, \dots,v_j\}$ contains a vertex $w$ such that $w$ is adjacent to a vertex in  $\{v_1, v_2, \dots,v_j\}$, say, $v_i$, and $L(w) < L(v_i)$. Then we take the largest $w$ with this property and add this $w$ to $\{v_1, v_2, \dots,v_j\}$ as a new element $v_{j+1}$. We also choose the largest $v_i \in \{v_1, v_2, \dots,v_j\}$ with the property that $L(v_{j+1}) < L(v_i)$ and add an edge to the tree between $v_{j+1}$ and $v_i$.
(b) Suppose that the set $\calV_{m,n}\setminus \{v_1, v_2, \dots,v_j\}$ does not contain a vertex $w$ such that $w$ is adjacent to a vertex in  $\{v_1, v_2, \dots,v_j\}$ and has the property that $L(w) < L(v_i)$ for some $v_i$.
If $\{v_1, v_2, \dots,v_j\}$ contains a vertex  of the form $(n,k)$ then the unique path within the tree from $v_1=(1,1)$ to $(n,k)$ satisfies all conditions that $\lambda _L$ is supposed to satisfy. In the opposite case,  $L$ restricted to the set $\calV_{m,n}\setminus \{v_1, v_2, \dots,v_j\}$, must attain the maximum, say, at $x$. If $x$ is adjacent to any vertex $y \in \{v_1, v_2, \dots,v_j\}$ then $L(x) > L(y)$ for every such $y$ because of the assumption made at the beginning of case (b). This implies that $x$ is a peak of $L$ in the whole graph $\calG_{m,n}$. Since this contradicts the assumption that $L$ has only one peak at $(1,1)$, the proof of existence of $\lambda_L$ is complete. 

If there is more than one labeling $\lambda_L$ satisfying the properties stated above then we 
let $\lambda_L$ be the labeling that, in addition, is minimal in some fixed arbitrary ordering of $\Lambda$. 

\begin{theorem}\label{f14.1}
Let $L$ be a random labeling of $\calV_{m,n}$, chosen uniformly from $\calB$ and let $\P_\calB$ denote the corresponding probability. For every $m \geq 3$
there exists  $n_1$ such  that for $n\geq n_1$,
\begin{align*}
\P_\calB\left(|\lambda_L| \geq n \left( 1 + \frac{\log\log n}{\log n}\right) \right) \leq \exp(-(1/2)n \log\log n).
\end{align*}
\end{theorem}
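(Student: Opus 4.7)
The plan is to adapt the counting method from the proof of Theorem \ref{f12.3}: lower bound $|\calB|$ by an explicit sparse-subgraph construction, upper bound $\#\{L\in\calB:|\lambda_L|\geq k_0\}$ by counting self-avoiding paths together with their decreasing-label extensions, and take the ratio. Throughout set $k_0 = \lceil n(1+\log\log n/\log n)\rceil$.

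For the lower bound on $|\calB|$, set $\ell = \lfloor\log n\rfloor$ and let $\calH_n\subset\calV_{m,n}$ be the union of row $1$ with the ``vertical cuts'' $\{(j,k):1\leq k\leq m\}$ for $j\in\{1,1+\ell,1+2\ell,\ldots\}\cap[n]$. Fix one canonical labeling of $\calH_n$ that uses the top $|\calH_n|$ labels of $[mn]$: $(1,1)=mn$, row $1$ labels decreasing rightwards, each cut column decreasing downwards. The complement $\calV_{m,n}\setminus\calH_n$ splits into $N$ disjoint ``panels'' (one per non-cut column), each containing the $m-1$ vertices $(j,2),\ldots,(j,m)$. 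Now assign the remaining $(m-1)N$ labels to the panels with the single rule that within each panel the labels are in decreasing order from $(j,2)$ down to $(j,m)$. Then every complement vertex has a higher-labeled neighbor (for $k=2$, the row-$1$ vertex above; for $k\geq 3$, the in-panel vertex directly above), so every such labeling lies in $\calB$. The number of panel assignments is the multinomial coefficient $((m-1)N)!/((m-1)!)^N$, and since $N\geq n(1-2/\log n)$, Stirling \eqref{f15.1}--\eqref{ap27.1} yields
\[
\log|\calB|\geq (m-1)n\log n + O(n).
\]

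For the upper bound, note that for each $L\in\calB$, $\lambda_L$ is a self-avoiding path in $\calG_{m,n}$ from $(1,1)$ to column $n$ on which $L$ strictly decreases. For a fixed such $\pi$ of length $k$, the number of labelings of $\calV_{m,n}$ that are strictly decreasing on $\pi$ is $\binom{mn}{k}(mn-k)! = (mn)!/k!$. Since every vertex has degree at most $4$, the number of self-avoiding paths of length $k$ starting at $(1,1)$ is at most $4^{k-1}$. Using the geometric decay of $4^{k-1}/k!$ for $k\geq n$,
\[
\#\{L\in\calB:|\lambda_L|\geq k_0\}\leq \sum_{k\geq k_0}\frac{4^{k-1}(mn)!}{k!}\leq \frac{2\cdot 4^{k_0-1}(mn)!}{k_0!}.
\]
Combining with the lower bound on $|\calB|$ and applying Stirling,
\begin{align*}
\log\P_\calB(|\lambda_L|\geq k_0)&\leq k_0\log 4 + mn\log(mn) - mn - k_0\log k_0 + k_0 - (m-1)n\log n + O(n)\\
&= n\log n - k_0\log k_0 + O(n).
\end{align*}
Expanding $k_0\log k_0 = n\log n + n\log\log n + o(n\log\log n)$ gives $\log\P_\calB(|\lambda_L|\geq k_0)\leq -n\log\log n + O(n)\leq -\tfrac12 n\log\log n$ for all sufficiently large $n$.

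The hardest step is the lower bound on $|\calB|$: the constant $(m-1)$ multiplying $n\log n$ is essential, since a weaker bound of only $O(n\log n)$ would leave a positive $n\log n$ surplus in the final ratio. The panel construction is what produces this extra entropy via the $((m-1)!)^{-N}$ factor, while the choice $\ell = \lfloor\log n\rfloor$ keeps $|\calH_n|=O(n)$ so that nearly all labels remain available for the complement.
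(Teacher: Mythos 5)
Your proof is correct and follows essentially the same strategy as the paper's: lower-bound $|\calB|$ by a monotone construction, upper-bound the number of labelings with a long gradient line by counting self-avoiding paths together with their strictly-decreasing label assignments, and compare via Stirling. The one stylistic difference is that your lower-bound construction imports the vertical-cut lattice $\calH_n$ from the two-dimensional proof of Theorem~\ref{f12.3}, which is unnecessary for the ladder: the paper simply places the top $n$ labels along row $1$ in decreasing order and notes that a decreasing arrangement of the $m-1$ remaining vertices in each column produces $(n(m-1))!/((m-1)!)^n$ single-peak labelings directly, giving the same leading term $(m-1)n\log n + O(n)$ with less bookkeeping.
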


\begin{proof}

First we will estimate the total number of deterministic labelings of $\calV_{m,n}$ with a single peak at $(1,1)$. 

Let $\calA$ be the family of all deterministic labelings $L$ of $\calV_{m,n}$ such that 
\begin{align*}
L(1,1) = nm, \ L(2,1) = nm-1, \dots, L(n,1)= nm - n +1.
\end{align*}
Note that labelings in the family $\calA$ may have multiple peaks. There are $(n(m-1))!$ labelings in $\calA$.

Let $\Gamma_k= \{(k,2), (k,3), \dots, (k,m)\}$ for $k\in[n]$. There exists a unique permutation of $L(k,2), L(k,3), \dots, L(k,m)$ which is decreasing so the number of labelings in $\calA$ which do not have peaks on $\Gamma_1$ is at least 
$(n(m-1))!/ (m-1)!$. For the same reason, the number of labelings in $\calA$ which do not have peaks on $\Gamma_1\cup \Gamma_2$ is at least 
$(n(m-1))!/ ((m-1)!)^2$. Extending the argument to all $\Gamma_k$'s, we conclude that the number of labelings in $\calA$ which do not have a peak 
on any $\Gamma_k$
 is at least 
$(n(m-1))!/ ((m-1)!)^n$. This implies that the number of labelings with a single peak at $(1,1)$ is bounded below by
\begin{align}\label{f14.2}
(n(m-1))!/ ((m-1)!)^n.
\end{align}

Let $f(n) = \log\log n/\log n$.
Consider any integer $j$ such that $n (1+ f(n))\leq j\leq  nm$.
We will count the number of paths $\lambda\in\Lambda$ such that $|\lambda|=j$
and $\lambda = \lambda_L$ for some $L\in\calB$.
We must have $L(1,1) = nm$ and $\lambda(1) = (1,1)$.
There are at most $\binom{nm}{j-1}$ choices for the values of $L(\lambda(k))$, $2\leq k\leq j$. 
Once these values are chosen, we arrange them along a ``continuous'' path
by choosing inductively a vertex $\lambda(k+1)$, given $\lambda(1), \dots, \lambda(k)$. 
Going from the top value monotonically to the lowest value, there are at most 3 choices at every step so  the number of $\lambda$ such that  $|\lambda| = j$ and $\lambda = \lambda_L$ for some $L\in\calB$ is bounded by $\binom{nm}{j-1}3^{j-1}$.

If $|\lambda| = j$ then there are $nm-j$ vertices outside the range of $\lambda$. Hence, the number of labelings $L\in\calB$ such that $\lambda_L$ is equal to a fixed $\lambda$ with $|\lambda|=j$ 
and the values of $L(\lambda(k))$ are fixed for $1\leq k \leq j$,
is bounded by $(nm - j)!$. Let $M(m,n,j)$ be the number of labelings $L\in\calB$ such that  $|\lambda_L|=j$.
We have
\begin{align*}
M(m,n,j)\leq
\binom{nm}{j-1}3^{j-1} (nm - j)! 
\leq \binom{nm}{j-1}3^{j-1} (nm - j+1)! 
= \frac{(nm)!}{(j-1)!} 3^{j-1}.
\end{align*}
For $n\geq 4$, the expression on the right hand side, considered to be a function of $j$ on the interval $n (1+ f(n))\leq j\leq  nm$, is maximized by $j=\lceil n (1+ f(n))\rceil$. 
Let $n_1 =\lceil n (1+ f(n))\rceil -1$. 
We obtain the following bound, 
\begin{align*}
M(m,n,j)\leq (nm)! 3^{n_1} / n_1 !.
\end{align*}

Next we sum over $j$ such that $n (1+ f(n))\leq j\leq  nm$. The number of labelings $L\in\calB$ such that  $|\lambda_L|\geq n (1+ f(n))$ is bounded by 
\begin{align}\label{f14.3}
n(m-f(n))(nm)! 3^{n_1} / n_1 !.
\end{align}
The probability that $|\lambda_L|\geq n (1+ f(n))$ for a labeling chosen uniformly from $\calB$ is bounded above by the ratio of the numbers in \eqref{f14.3} and \eqref{f14.2}. We use the Stirling formula \eqref{f15.1}
and \eqref{j20.1} to see that
for all sufficiently large $n$,
\begin{align*}
&\P_\calB(|\lambda_L|\geq n (1+ f(n)))\\
&\quad \leq \frac
{n(m-f(n))(nm)! 3^{n_1} / n_1 !}
{(n(m-1))!/ ((m-1)!)^n}\\
&\quad =
\frac
{n(m-f(n))(nm)! 3^{n _1} ((m-1)!)^n}
{(n(m-1))!n_1 !}\\
&\quad \leq \exp\Big(
\log n + \log (m-f(n)) + nm \log (nm)  + O(\log (nm))
+ n (1+ f(n)) \log 3 \\
& \qquad \qquad
+ n (m-1) \log(m-1)  +   O(n\log(m-1)) \\
& \qquad \qquad
- n(m-1)\log(n(m-1) ) + n (m-1) + O(\log(n(m-1)))\\
& \qquad \qquad
- n (1+ f(n)) \log(n (1+ f(n)) -1) + n (1+ f(n)) + O(\log(n (1+ f(n))))
\Big)\\
&\quad \leq \exp\Big( nm \log (nm) 
- n(m-1)\log n 
- n (1+ f(n)) \log(n (1+ f(n)) -1) + O(n)
\Big)\\
&\quad = \exp\Big( nm \log n + nm \log m
- n(m-1)\log n  \\
& \qquad \qquad
- n  \log n
- n  f(n) \log n
- n  \log(1+ f(n) -1/n)  \\
& \qquad \qquad
- n  f(n) \log(1+ f(n) -1/n) 
+ O(n)
\Big)\\
&\quad = \exp\Big(
- n  f(n) \log n
- n  \log(1+ f(n) -1/n) 
- n  f(n) \log(1+ f(n) -1/n) 
+ O(n)
\Big)\\
&\quad = \exp\Big(
- n  f(n) \log n
+ O(n )
\Big)\\
&\quad = \exp\Big(
- n  (\log\log n/\log n) \log n
+ O(n)
\Big)\\
&\quad \leq\exp( -(1/2) n \log\log n ) .
\end{align*}
\end{proof}

\section{Random labelings of trees}\label{tree}

It often happens that trees are more tractable than general graphs. This is indeed the case when we consider random labelings conditioned on a small number of peaks.  We will start this section with some preliminary results.

\begin{definition}\label{j24.2}
(i) Suppose that a graph $\calG$ is a tree with $N$ vertices. We will say that $x$ is a centroid of $\calG$ if each subtree of $\calG$ which does not contain $x$ has at most $N/2$ vertices.

(ii) 
Suppose that $\calG$ is a tree and $x$ is one of its vertices.
 Define a partial order ``$\leq$'' on the set of vertices $\calV$ by declaring that for $y,z\in \calV$, we have $y\leq z$ if and only if $y$ lies on the unique path joining 
 $x$ and $z$.
If $y\leq z$ then we will say that $y$ is an ancestor of $z$ and $z$ is a descendant of $y$. We will write $D^x_y$ to denote the family of all descendants of $y$, including $y$. We will also use the notation $\n_x(y) = |D^x_y|$. If needed, we will indicate the dependence on the graph by adding a superscript, for example, $\n^\calG_x(y)$.

\end{definition}

\begin{remark}\label{m5.1}
The following results can be found in \cite[Thm.~2.3]{BH}.
(i) A finite  tree graph has at least one and at most two centroids. (ii) If it has two centroids then they are adjacent.

Part (i) cannot be strengthened to say that all trees have only one centroid. A linear graph with $N$ vertices has one or two centroids depending on the parity of $N$.

\end{remark}

\begin{proposition}\label{ap1.1}
Suppose that $L$ is a random labeling of a tree $\calG$ and let $K$ denote the location of the highest label. The function $x\to \P(\calN(\calG,1) \cap \{K=x\})$ attains the maximum at all centroids of $\calG$ and only at the centroids. 
\end{proposition}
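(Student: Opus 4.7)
My plan is to reduce the claim to a statement about the quantity $g(x) := \prod_{y \in \calV} \n_x(y)$, and then identify its minimizers as the centroids.

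The first step is to establish the combinatorial identity
\begin{align*}
\P(\calN(\calG,1) \cap \{K = x\}) = \frac{1}{g(x)}.
\end{align*}
I would argue this in two parts. (a) For a tree, $L$ has its unique peak at $x$ if and only if $L(y) > L(z)$ for every edge $(y,z)$ with $y$ the parent of $z$ in the rooting at $x$. The ``$\Leftarrow$'' direction is immediate. For ``$\Rightarrow$'', fix $z \ne x$ and look at the subtree $D^x_z$; the maximum of $L$ on $D^x_z$ is attained at some $v$, and if $v \ne z$ then all neighbors of $v$ lie inside $D^x_z$ and carry smaller labels, so $v$ would be a second peak. Hence the maximum is at $z$ itself, and the parent $y$ (the only neighbor of $z$ outside $D^x_z$) must satisfy $L(y) > L(z)$. (b) The number of such labelings is the number of linear extensions of the rooted-tree poset, which by the classical hook-length formula for trees equals $N!/g(x)$; dividing by $N!$ gives the identity.

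The second step is a re-rooting computation. Let $x'$ be a neighbor of $x$ and set $s := \n_x(x')$. A direct check shows that $\n_{x'}(y) = \n_x(y)$ for all $y \notin \{x,x'\}$, while the two exceptional factors change from $(\n_x(x),\n_x(x')) = (N,s)$ to $(\n_{x'}(x),\n_{x'}(x')) = (N-s,N)$. Therefore
\begin{align*}
\frac{g(x')}{g(x)} = \frac{N-s}{s},
\end{align*}
which is $<1$, $=1$, or $>1$ according as $s > N/2$, $s = N/2$, or $s < N/2$. By Definition \ref{j24.2}(i), $x$ is a centroid precisely when this ratio is $\ge 1$ for every neighbor; together with Remark \ref{m5.1}, the equality case $s = N/2$ for an edge $(x,x')$ corresponds exactly to both endpoints being centroids, so $g$ takes the same value at both centroids when there are two.

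The final step promotes this local picture to a global one. Suppose $x$ is not a centroid. Since the subtree sizes at the neighbors of $x$ sum to $N-1$, there is a unique neighbor $x'$ with $s > N/2$; moving to $x'$ strictly decreases $g$ and leaves the subtree containing $x$ of size $N-s < N/2$, so the next step of the descent cannot backtrack. Iterating produces a path along which $g$ strictly decreases, and since $g$ takes only finitely many values the path must terminate at a centroid $x^*$; hence $g(x) > g(x^*)$. Combined with the fact that $g$ is constant on the set of centroids, this shows that $1/g(x) = \P(\calN(\calG,1)\cap\{K=x\})$ is maximized precisely on the centroids, which is the claim. The step demanding the most care is (a), the equivalence between unique-peak and monotonicity from $x$; the remaining two steps are essentially elementary bookkeeping and an iterative descent argument.
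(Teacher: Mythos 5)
Your proof is correct, and the core re-rooting calculation $g(x')/g(x) = (N-s)/s$ agrees with the paper's \eqref{y24.1}. Where you genuinely diverge is in establishing the identity $\P(\calN(\calG,1)\cap\{K=x\}) = 1/\prod_{y\in\calV}\n_x(y)$. The paper obtains it by a direct probabilistic layering: it introduces the events $F_v = \{L(v)\ge\max_{z\in D^x_v}L(z)\}$, notes $\calN(\calG,1)\cap\{K=x\}=\bigcap_v F_v$, and computes $\P(\bigcap_{v\in\calV_k}F_v\mid\bigcap_{v\in\calV^+_{k-1}}F_v)$ shell by shell, exploiting the conditional independence of subtrees. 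You instead identify the valid labelings as exactly the order-reversing bijections (equivalently, linear extensions) of the poset rooted at $x$ and cite Knuth's hook-length formula for rooted trees, $|\{\text{linear extensions}\}| = N!/\prod_v \n_x(v)$. This buys brevity at the cost of invoking a known enumerative result as a black box; the paper's layering argument is self-contained and, unsurprisingly, is essentially one of the standard proofs of that very hook-length formula, so the two are morally the same calculation dressed differently. Your unique-peak-iff-monotone equivalence in step (a) is also proved in more detail than the paper spells out, and is fine (the key observation that a non-root maximizer of $L$ on $D^x_z$ would itself be a peak is exactly right). Finally, the descent argument in your last step is a worthwhile expansion of what the paper dismisses with ``this can be easily rephrased''; in particular, your observations that at most one neighbor can have subtree size $>N/2$, that the descent cannot backtrack, and that $g$ is constant across the (at most two, adjacent) centroids, collectively nail down both existence and exclusivity of the maximizers.
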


\begin{proof}

Consider any $x\in \calV$.
Recall the notation $D^x_y$ and $\n_x(y)$ from Definition \ref{j24.2} (ii).
Let
\begin{align*}
F_v = \left\{ L(v) \geq \max_{z \in D^x_v} L(z)\right\}.
\end{align*}
Note that 
\begin{align*}
\calN(\calG,1) \cap \{K=x\} = \bigcap_{v\in \calV} F_v.
\end{align*}

Let $\calV_0 =\calV^+_0 = \{x\}$, $\calV_k = \{v\in \calV \setminus \calV_{k-1}^+:  \exists z \in \calV^+_{k-1} \text{  such that  } \ v\lra z\}$ and $\calV^+_k = \calV_k\cup \calV^+_{k-1}$ for $k\geq 1$. Let $N=|\calV|$ and note that $\P(F_x) = 1/N= 1/ \n_x(x)$. It is easy to see that
\begin{align*}
\P\left(\bigcap_{v\lra x} F_v\; \middle|\; F_x\right) = 
\frac 1 {\prod_{v\lra x} \n_x(v)}.
\end{align*}
In general, for $k\geq 1$,
\begin{align*}
\P\left(\bigcap_{v\in \calV_k} F_v\; \middle|\; \bigcap_{v\in \calV^+_{k-1}} F_v\right) = 
\frac 1 {\prod_{v\in \calV_k} \n_x(v)}.
\end{align*}
It follows that
\begin{align}\label{m21.1}
\P(\calN(\calG,1) \cap \{K=x\}) &=  \P(F_x)\prod_{k\geq 1}
\P\left(\bigcap_{v\in \calV_k} F_v \; \middle| \;\bigcap_{v\in \calV^+_{k-1}} F_v\right)\\
&= \prod_{k\geq 0 } \frac 1 {\prod_{v\in \calV_k} \n_x(v)}
= \frac 1 {\prod_{z\in \calV} \n_x(z)} .\nonumber
\end{align}

We will now vary $x$. Consider two adjacent vertices $x$ and $y$.
Note that $\n_y(v) = \n_x(v)$ for $v\ne x,y$.
 We have
\begin{align*}
\n_y(x) = \n_x(x) - \n_x(y), \qquad \n_y(y) = N = \n_x(x).
\end{align*}
Therefore,
\begin{align}\label{y24.1}
\frac {\P(\calN(\calG,1) \cap \{K=x\})}{\P(\calN(\calG,1) \cap \{K=y\})}
= \frac {\n_y(x) \n_y(y)} {\n_x(x) \n_x(y)}
= \frac {\n_y(x )} { \n_x(y)}
= \frac { \n_x(x) - \n_x(y) } { \n_x(y)}.
\end{align}
It follows that $\P(\calN(\calG,1) \cap \{K=x\}) \geq  \P(\calN(\calG,1) \cap \{K=y\})$ 
iff $ \n_x(x) - \n_x(y) \geq \n_x(y)$ iff $N=\n_x(x)  \geq 2\n_x(y)$ iff $\n_x(y) \leq N/2$.
This can be easily rephrased as  the statement of the proposition.
\end{proof}

Consider a tree $\calG$ with the vertex set $\calV$.
Let $\V$ be the family of all (unordered) pairs of subsets $\{\calV', \calV''\}$ 
of the vertex set $\calV$ such that $\calV'$ and $\calV''$ are vertex sets of  non-empty subtrees of $\calG$, $\calV' \cap \calV'' = \emptyset$ and $\calV' \cup \calV'' = \calV$. In other words, $\V$ is the family of all partitions of $\tree$ into two subtrees that can be generated by removing an edge.

\begin{lemma}\label{j24.1}
Consider a (deterministic) labeling $L$ of a tree $\tree$ that has exactly two peaks $y_1$ and $y_2$.

(i) There exist exactly two pairs 
$\{\calV', \calV''\}\in \V$ such that $y_1$ is the only peak  of $L$ restricted to $\calV'$ and $y_2$ is the only peak of $L$ restricted to $\calV''$. 

(ii) If the distance between $y_1$ and $y_2$ is greater than 2 then there exist at least three pairs $\{\calV', \calV''\}\in \V$ such that $y_1\in\calV'$ and $y_2\in\calV''$. Suppose that $\{\calV', \calV''\}\in \V$ is one of such pairs and it does not satisfy the condition stated in (i). Then
 either (a) $L$ restricted to $\calV'$ has exactly two peaks,  one of them located at $y_1$ and the other one adjacent to $\calV''$, and $L$ restricted to $\calV''$ has exactly one peak at $y_2$,
or (b) $L$ restricted to $\calV''$ has exactly two peaks,  one of them located at $y_2$ and the other one adjacent to $\calV'$, and $L$ restricted to $\calV'$ has exactly one peak at $y_1$. 
\end{lemma}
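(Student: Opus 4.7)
The plan is to parametrize pairs in $\V$ with $y_1 \in \calV'$ and $y_2 \in \calV''$ by edges along the unique $y_1$--$y_2$ path, and to exploit the rigidity of the label profile along this path forced by the two-peaks hypothesis. Write the path as $y_1 = u_0, u_1, \dots, u_d = y_2$; note $d \ge 2$, since adjacent peaks cannot coexist (the inequalities $L(y_1) > L(y_2)$ and $L(y_2) > L(y_1)$ are incompatible). Each separating pair arises by removing a unique path edge $e_j = u_{j-1}u_j$, and the only vertices in $\calV'$ (resp.\ $\calV''$) whose $\tree$-neighborhood shrinks under restriction are $u_{j-1}$ (resp.\ $u_j$), so these are the only candidate extra peaks. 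Writing $U(v) := \{w \sim v : L(w) > L(v)\}$, a short check shows $u_{j-1}$ is a peak of $L|_{\calV'}$ iff $j = 1$ or $U(u_{j-1}) = \{u_j\}$, and analogously for $u_j$ on the $\calV''$ side.

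The structural core is the claim that $U(u_i) \subseteq \{u_{i-1}, u_{i+1}\}$ for every interior $u_i$, i.e., no uphill neighbor lies off the path. Suppose $w$ is an off-path neighbor of $u_i$ with $L(w) > L(u_i)$ and let $T$ be the component of $w$ in $\tree$ after deleting the edge $u_i w$. Then $v^* := \argmax_{v \in T} L(v)$ is a peak of $L$ in $\tree$: either $v^* \ne w$ and all its $\tree$-neighbors lie in $T$ with smaller label, or $v^* = w$ and its only non-$T$ neighbor $u_i$ has $L(u_i) < L(w) = L(v^*)$. Since $T$ is disjoint from the $u_0$--$u_d$ path, $v^* \notin \{y_1, y_2\}$, contradicting the two-peaks hypothesis.

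This structural claim forces the sign sequence of the differences $L(u_j) - L(u_{j-1})$ for $j = 1, \ldots, d$ to take the form $(-)^k(+)^{d-k}$ for a unique $1 \le k \le d-1$: the extremes are $-$ and $+$, and the pattern $(+,-)$ at consecutive positions is forbidden because then $U(u_j) = \emptyset$ by the structural claim and $u_j$ would be a third peak. Equivalently, $L$ decreases strictly from $u_0$ down to $u_k$ and increases strictly from $u_k$ to $u_d$. Translating the peak-creation criteria in this language, one gets $U(u_j) = \{u_{j-1}\}$ for $j > k$, $U(u_j) = \{u_{j+1}\}$ for $j < k$, and $U(u_k) = \{u_{k-1}, u_{k+1}\}$; a direct case check then shows $e_j$ is good (both $L|_{\calV'}$ and $L|_{\calV''}$ have only a single peak) iff $j \in \{k, k+1\}$, giving part (i).

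For (ii), the hypothesis $d > 2$ guarantees at least three separating pairs. Any bad edge $e_j$ has $j \le k-1$ or $j \ge k+2$. In the first case, $U(u_j) = \{u_{j-1}\}$ by the sign pattern, producing an extra peak at $u_j \in \calV''$ adjacent to $u_{j-1} \in \calV'$, while $U(u_{j-1}) \ne \{u_j\}$ so $\calV'$ retains only the peak $y_1$; this is case (b). The second case is symmetric and yields (a). The main obstacle is the third-peak argument in the second paragraph; once that rigidifies the label profile along the $y_1$--$y_2$ path, everything else is a bookkeeping exercise on the sign pattern.
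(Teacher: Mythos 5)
Your proof is correct and takes essentially the same route as the paper: locate the unique trough $u_k$ of $L$ on the $y_1$--$y_2$ geodesic, observe that the two good pairs come from removing the two edges incident to $u_k$, and that every other separating edge pushes exactly one extra peak onto one side; you additionally make explicit, via the component-argmax argument, the unimodality of $L$ along the geodesic, which the paper simply asserts. One small slip: in the paragraph listing $U(u_j)$ the cases $j<k$ and $j>k$ are swapped (since $L$ decreases into the trough and increases after it, one should have $U(u_j)=\{u_{j-1}\}$ for $0<j<k$ and $U(u_j)=\{u_{j+1}\}$ for $k<j<d$), but you apply the correct values in the closing case check, so the argument is unaffected.
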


\begin{proof} (i)
Since $y_1$ and $y_2$ are peaks, the distance between them must be equal to or greater than 2.
 For $x,z\in\calV$, let $[x,z]$ denote the geodesic between $x$ and $z$, including both vertices. If there are  exactly two peaks $y_1$ and $y_2$ then there exists a unique $y_3\in [y_1,y_2]$ such that $y_1\ne y_3 \ne y_2$ and the labeling is monotone  on both $[y_1,y_3]$ and $[y_2,y_3]$.
Let $y_4 $ be the neighbor of $y_3$
in $[y_1,y_3]$ and let
$y_5 $ be the neighbor of $y_3$
in $[y_2,y_3]$. Note that $y_4$ could be $y_1$ or $y_5$ could be $y_2$.
Let $\{\calV', \calV''\}$ be subtrees obtained by removing the edge between $y_4$ and $y_3$ from $\tree$. 
Let $\{\wt\calV', \wt\calV''\}$ be subtrees obtained by removing the edge between $y_5$ and $y_3$ from $\tree$. 
It is easy to see that $\{\calV', \calV''\}$ and $\{\wt\calV', \wt\calV''\}$ are the only elements of $\V$ such that $y_1$ is the only peak in one of the subtrees in the pair and $y_2$ is the only peak in the other subtree in the same pair.

(ii) If the distance between $y_1$ and $y_2$ is greater then 2 then there are at least three edges along the geodesic  $[y_1,y_2]$. Removing an edge not adjacent to $y_3$  will generate a pair $\{\calV', \calV''\}\in \V$ with the properties specified in part (ii).
\end{proof}

The next result is related to the conjecture stated in Section \ref{intro}. It follows easily from the arguments used in the proof of \cite[Thm.~4.9]{BBPS} that if a random labeling of a large linear graph is conditioned to have exactly two peaks then the peaks are likely to be at a distance about one half of the length of the graph. Going in the opposite direction,
we will show that the ratio of the distance between the twin peaks and the tree diameter can be arbitrarily close to zero.

In the following proposition, if the random labeling has exactly two peaks, we will denote the location of the highest peak $K_1$ and the location of the other one $K_2$.

\begin{proposition}\label{y1.2}
Fix arbitrarily small $p>0$ and arbitrarily large $m$. There exists a tree with diameter greater than $m$ such that  $\P_2(K_1=x, K_2=y)$ attains the unique maximum for some $x$ and $y$ at the distance 3. Moreover, $\P_2( \dist(K_1,K_2) \geq 8 ) < p$.
\end{proposition}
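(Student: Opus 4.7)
My strategy is to exhibit an explicit family of trees $\tree_{N,L}$ for which, as $N$ grows, the two-peak conditional distribution concentrates on a single unordered pair at distance $3$, independently of $L$ (which will drive the diameter).

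\textbf{Construction and weight formula.} Fix $L \geq m$ and (eventually large) $N$. Let $\tree_{N,L}$ consist of a backbone path $x - v_1 - v_2 - y$ of length $3$, with $N$ pendant leaves attached to $x$, $N+1$ pendant leaves attached to $y$, and a pendant path of $L$ vertices attached to $y$. Then $\diam(\tree_{N,L}) \geq L + 3 > m$. Combining Lemma~\ref{j24.1}(i) with the single-peak formula \eqref{m21.1} of Proposition~\ref{ap1.1}, one obtains, for every non-adjacent pair $\{z_1, z_2\}$,
\begin{align*}
W(z_1, z_2) := \P\bigl(\calN(\tree,2) \cap \{\{K_1, K_2\} = \{z_1, z_2\}\}\bigr)
= \tfrac{1}{2}\sum_{e \in [z_1,z_2]} \Bigl(\prod_{w \in \calV'_e} \n_{z_1}^{\calV'_e}(w) \cdot \prod_{w \in \calV''_e} \n_{z_2}^{\calV''_e}(w)\Bigr)^{-1},
\end{align*}
the sum running over edges of the geodesic from $z_1$ to $z_2$, with $\calV'_e \ni z_1$ and $\calV''_e \ni z_2$ the resulting subtrees, and the factor $\tfrac{1}{2}$ correcting for the double-counting of Lemma~\ref{j24.1}(i).

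\textbf{Dominance at $(x, y)$.} A direct computation over the three edges of $[x, y]$ yields $W(x, y) = \Theta\bigl(1 / (N(N+L) L!)\bigr)$; the factor $L!$ reflects the unique arrangement (monotonic decrease from $y$) of the labels on the pendant path that makes $y$ the single peak on its side, while $N(N+L)$ comes from the cluster-size $\n$-factors. For every other candidate pair $(z_1, z_2)$, a case analysis shows that the denominator picks up at least one extra factor of order $N$. Specifically: pairs $(x_i, y)$ or $(x, y_j)$ with $x_i$, $y_j$ a cluster leaf pick up an extra $\n$-factor of size $\Theta(N)$ from the parent cluster now sitting inside the product; pairs $(x, v_2)$, $(v_1, y)$ and other off-backbone pairs pick up a factor $\asymp N + L$; and pairs $(z, p_k)$ with $p_k$ a vertex of the pendant path accumulate, along the path, a product $\prod_{j<k}(N + \Theta(j)) = \Omega(N^{k-1})$. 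The asymmetry $N$ vs.\ $N+1$ between the leaf clusters strictly breaks the remaining tie between the two orderings $(K_1, K_2) = (x, y)$ and $(K_1, K_2) = (y, x)$, because the heavier side is more likely to contain the global maximum. Thus the maximum of $\P_2(K_1 = x, K_2 = y)$ is uniquely attained at an ordered pair whose vertices lie at distance $3$.

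\textbf{Long-distance concentration and main difficulty.} Any pair $(z_1, z_2)$ with $\dist(z_1, z_2) \geq 8$ necessarily involves some $p_k$ with $k \geq 4$. Using the geometric decay in $k$ from item~(c) above together with the fact that there are only $O(N L)$ such pairs, one obtains $\sum_{\dist \geq 8} W(z_1, z_2) \leq C (L/N)^4 \, W(x, y)$ for a universal constant $C$. Since $W_\mathrm{total} \geq W(x, y)$, this yields $\P_2(\dist(K_1, K_2) \geq 8) \leq C (L/N)^4$, which is $< p$ once $N \geq L (C/p)^{1/4}$. The principal technical obstacle is the bookkeeping in the dominance step: one must classify every qualitatively distinct configuration of $(z_1, z_2)$ (cluster centre, cluster leaf, backbone vertex, shallow/deep path vertex, pairs of path vertices) and verify that each type yields a weight bounded by $W(x, y)/N$ uniformly, with the additional factor $(L/N)^{k-3}$ needed for path-involving pairs to get the distance estimate.
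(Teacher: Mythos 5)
Your overall strategy — exhibit an explicit tree with two high-degree ``hub'' vertices at distance $3$, then use the single-peak product formula \eqref{m21.1} together with the partition Lemma~\ref{j24.1} to show that the two-peak distribution concentrates near the hubs — is essentially the same as the paper's, which takes $n$ leaves hanging off $z_1$ and $z_4$ on a long backbone $z_1,\dots,z_m$ with $n\gg m!$. Your construction is an equivalent cosmetic variant, and the tie-breaking trick with $N$ vs.\ $N+1$ leaves is a small addition not in the paper (the paper is content with the pair $\{z_1,z_4\}$ up to order).

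There is, however, a genuine error at the foundation of the computation: the displayed ``weight formula''
\[
W(z_1,z_2) = \tfrac12 \sum_{e\in[z_1,z_2]} \Bigl(\prod_{w\in\calV'_e}\n^{\calV'_e}_{z_1}(w)\cdot\prod_{w\in\calV''_e}\n^{\calV''_e}_{z_2}(w)\Bigr)^{-1}
\]
is \emph{not} an equality. Each summand equals $\P\bigl(\calN(\calG'_e,1)\cap\calN(\calG''_e,1)\cap\{K'_1=z_1,K''_1=z_2\}\bigr)$, and while Lemma~\ref{j24.1}(i) shows that every labeling with exactly two peaks at $\{z_1,z_2\}$ is counted in exactly two of these terms, the terms for the two edges \emph{adjacent to $z_1$ or $z_2$} also count labelings in which $z_1$ (resp.\ $z_2$) is a peak of its subtree but \emph{not} a peak of $\calG$, because its label is smaller than that of the boundary vertex on the other side. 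Thus $\tfrac12\sum_e(\cdots)^{-1}\ge W(z_1,z_2)$ with strict inequality in general. A quick sanity check on the $3$-vertex path $z_1-v-z_2$ gives $W=1/3$ but $\tfrac12\sum_e = 1/2$. This matters because your $W(x,y)=\Theta(1/(N(N+L)L!))$ conclusion implicitly needs a \emph{lower} bound on $W(x,y)$, which the formula cannot provide. The fix is what the paper actually does (see its display around \eqref{m21.2}): use only the \emph{middle} edge $(v_1,v_2)$, for which the event ``one peak at $x$ in $\calG'$ and one peak at $y$ in $\calG''$'' is \emph{equal} to the two-peak event at $\{x,y\}$ in $\calG$ (since $v_1$, $v_2$ are adjacent to the peaks and thus cannot themselves become peaks of their subtrees), giving an exact product lower bound; and use the full sum over all geodesic edges only as an \emph{upper} bound for competing pairs, with an explicit remainder whenever one of the parts may carry two peaks (Lemma~\ref{j24.1}(ii)), a case your sketch does not separate out. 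With these corrections your dominance and concentration steps would go through in the same way as the paper's.
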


\begin{proof}
Let $m > 100$, $n> (m!)^2$, and let the vertex set $\calV$ of the graph $\calG$ consist of points $x_1, x_2, \dots, x_n, y_1, y_2, \dots, y_n, z_1, z_2, \dots, z_m$. 
The only pairs of vertices connected by edges are $(z_k, z_{k+1})$ for $k\in[m-1]$, $(z_1, x_k)$ for $k\in[n]$ and $(z_4, y_k)$ for $k\in[n]$.  

First, we will find a lower estimate for  
\begin{align*}
\P_\calG&(\calN(\calG,2) \cap(  \{K_1=z_1, K_2=z_4\}
\cup \{K_2=z_1, K_1=z_4\})).
\end{align*}
Let $\calG'$ be the subgraph of $\calG$ consisting of vertices $z_1,z_2,x_1, x_2, \dots, x_n$. Let $\calG''$ be the subgraph of $\calG$ consisting of all the remaining vertices. Let $K'_1$ be the highest peak of a random labeling of $\calG'$ and let $K''_1$ be the highest peak of a random labeling of $\calG''$. We have
\begin{align*}
\P_\calG&(\calN(\calG,2) \cap(  \{K_1=z_1, K_2=z_4\}
\cup \{K_2=z_1, K_1=z_4\})) \\
&=
\P_\calG(\calN(\calG',1) \cap\{K'_1 = z_1\} \cap \calN(\calG'',1) \cap\{K''_1= z_4\})\\
&= \P_{\calG'}(\calN(\calG',1) \cap\{K'_1 = z_1\}) 
\P_{\calG''}(\calN(\calG'',1) \cap\{K''_1= z_4\}).
\end{align*}
We have equality on the second line above, despite Lemma \ref{j24.1} (ii), because if $z_1$ and $z_4$ are peaks of a labeling of $\calG$ then the same labeling restricted to $\calG'$ cannot have a peak at $z_2$ and the  labeling restricted to $\calG''$ cannot have a peak at $z_3$.

By \eqref{m21.1} applied  to $\calG'$,
\begin{align*}
\P_{\calG'}(\calN(\calG',1) \cap\{K'_1 = z_1\}) = \frac 1 {n+2}.
\end{align*}
The same formula applied  to $\calG''$  yields
\begin{align*}
\P_{\calG''}(\calN(\calG'',1) \cap\{K''_1= z_4\}) &= \frac 1 {(n+m-2)(m-4)(m-5) \dots 2 \cdot 1}\\
&= \frac 1 {(n+m-2)(m-4)!}.
\end{align*}
It follows that
\begin{align}\label{m21.2}
\P_\calG&(\calN(\calG,2) \cap(  \{K_1=z_1, K_2=z_4\}
\cup \{K_2=z_1, K_1=z_4\}))\\
&\geq \frac 1 {(n+2)(n+m-2)(m-4)!}.\nonumber
\end{align}

Next we will find an upper bound for $\P_\calG(\calN(\calG,2) \cap  \{K_1=v_1, K_2=v_2\})$,
assuming that $(v_1,v_2) \ne (z_1, z_4)$ and $(v_1,v_2) \ne (z_4, z_1)$. Let $\bfG$ be the family of all pairs $(\calG', \calG'')$ of disjoint  subtrees  of $\calG$, with vertex sets $\calV'$ and $\calV''$, such that $\calV'\cup \calV''=\calV$, $v_1\in \calV'$ and $v_2\in \calV''$.
Let $K'_1$ be the highest peak  of a random labeling of $\calG'$ and let $K'_2$ be the second highest peak  of  $\calG'$; similar notation will be used for $\calG''$. We will use Lemma \ref{j24.1}.

Consider a pair $(\calG', \calG'') \in \bfG$.
Note that if $z_1$ is in $\calG'$ then at least $n-1$ of vertices $x_1, x_2, \dots, x_n$ are also in $\calG'$. 
If $z_4$ is in $\calG'$ then at least $n-1$ of vertices $y_1, y_2, \dots, y_n$ are also in $\calG'$. 
Analogous claims hold for $z_1$ and $\calG''$, and for $z_4$ and $\calG''$.

Suppose that $z_1$ and $z_4$ are in two different graphs $\calG'$ and $\calG''$.
Suppose without loss of generality that  $z_1$ is in $\calG'$ and $z_4$ is in $\calG''$. 
Recall that $(v_1,v_2) \ne (z_1, z_4)$.
Consider the case when $v_1 \ne z_1$. 
Note that
 $\n^{\calG'}_{v_1}(v_1) \geq n$ and $ \n^{\calG'}_{v_1}(z_1)\geq n-1$ so, by \eqref{m21.1},
\begin{align*}
\P_{\calG'}(\calN(\calG',1) \cap \{K'_1 = v_1\}) \leq \frac 1 {n(n-1)}.
\end{align*}
Since
 $\n^{\calG''}_{v_2}(v_2) \geq n$ so, by \eqref{m21.1},
\begin{align*}
\P_{\calG''}(\calN(\calG'',1) \cap \{K''_1 = v_2\}) \leq \frac 1 {n}.
\end{align*}
A completely analogous argument shows that, if $v_1 = z_1$ and $v_2 \ne z_4$ then
\begin{align}
\P_{\calG'}(\calN(\calG',1) \cap \{K'_1 = v_1\}) &\leq \frac 1 {n},
\nonumber\\
\P_{\calG''}(\calN(\calG'',1) \cap \{K''_1 = v_2\}) &\leq \frac 1 {n(n-1)}.
\label{j25.2}
\end{align}
We conclude that if $z_1\in\calV'$, $z_4\in\calV''$ and
 $(v_1,v_2) \ne (z_1, z_4)$ then,
\begin{align}\label{j26.10}
\P_{\calG}(\calN(\calG',1) \cap \calN(\calG'',1) \cap
\{K'_1 = v_1,K''_1 = v_2\}) \leq  \frac 1 {n^2(n-1)}.
\end{align}

Recall the possibilities listed in Lemma \ref{j24.1}. 
We will estimate the probability of exactly two peaks in one of the subgraphs.

We are returning to the case when $z_1\in\calV'$, $z_4\in\calV''$,
 $(v_1,v_2) \ne (z_1, z_4)$ and $v_1\ne z_1$. 
Since $\calV'$ contains at least $n$ vertices,  $\P_{\calG'}(K'_1=v_1)\leq 1/n$. If the labeling has exactly two peaks then there are at least $n-2$ vertices 
$x'_1, \dots , x'_{n-2}$ among $x_1, \dots , x_n$, such that $x'_k\in \calV'$ and $x'_k$ is not a peak, for $k=1,\dots, n-2$. 
 Therefore, the label of $z_1$ must be larger than the labels of all $x'_k$'s. 
Note that $n \leq |\calV'| \leq n+3$.
Assuming that $K'_1 = v_1$, the probability that the label of $z_1$ is  
larger than the labels of all $x'_k$'s  is bounded by $5/(n-1)$. Hence,
\begin{align*}
\P_{\calG'}(\calN(\calG',2) \cap
\{K'_1 = v_1\}) \leq  \frac 5{n(n-1)}.
\end{align*}
The graph $\calG''$ has at least $n$ vertices so $\P_{\calG''}(K''_1=v_2)\leq 1/n$. We see that if $z_1\in\calV'$, $z_4\in\calV''$,
 $(v_1,v_2) \ne (z_1, z_4)$ and $v_1 \ne z_1$ then,
\begin{align}\label{j26.11}
\P_{\calG}(\calN(\calG',2) \cap \calN(\calG'',1) \cap
\{K'_1 = v_1,K''_1 = v_2\}) \leq  \frac 5 {n^2(n-1)}.
\end{align}

The next case to consider is when $z_1\in\calV'$, $z_4\in\calV''$,
 $(v_1,v_2) \ne (z_1, z_4)$ and $v_2\ne z_4$. 
Since $\calV''$ contains at least $n$ vertices,  $\P_{\calG''}(K''_1=v_2)\leq 1/n$. If the labeling has exactly two peaks then there are at least $n-2$ vertices 
$y'_1, \dots , y'_{n-2}$ among $y_1, \dots , y_n$, such that $y'_k\in \calV''$ and $y'_k$ is not a peak, for $k=1,\dots, n-2$. 
 Therefore, the label of $z_4$ must be larger than the labels of all $y'_k$'s. 
Note that $n \leq |\calV''| \leq n+m$.
Assuming that $K''_1 = v_2$, the probability that the label of $z_4$ is  
larger than the labels of all $y'_k$'s  is bounded by $(m+2)/(n-1)$. Hence,
\begin{align*}
\P_{\calG''}(\calN(\calG'',2) \cap
\{K''_1 = v_2\}) \leq  \frac {m+2}{n(n-1)}.
\end{align*}
The graph $\calG'$ has at least $n$ vertices so $\P_{\calG'}(K'_1=v_1)\leq 1/n$. We see that if $z_1\in\calV'$, $z_4\in\calV''$,
 $(v_1,v_2) \ne (z_1, z_4)$ and $v_2 \ne z_4$ then,
\begin{align}\label{j26.12}
\P_{\calG}(\calN(\calG',1) \cap \calN(\calG'',2) \cap
\{K'_1 = v_1,K''_1 = v_2\}) \leq  \frac {m+2} {n^2(n-1)}.
\end{align}

Next suppose that $z_1$ and $z_4$ are in the same of two graphs $\calG'$ and $\calG''$. Without loss of generality, suppose  that they are in $\calG'$.  
It is easy to see that
 $\n^{\calG'}_{v_1}(v_1) \geq n$ and $\n^{\calG'}_{v_1}(x) \geq n$ for at least two other $x$ in the set $\{z_1,z_2,z_3,z_4\}$ so, by \eqref{m21.1},
\begin{align*}
\P_{\calG'}(K'_1 = v_1) \leq \frac 1 {n^3}.
\end{align*}
The maximum of $1/n^3$ and the bounds in \eqref{j26.10}, \eqref{j26.11} and \eqref{j26.12} is less than $\frac{m+2 }{n^2(n-1)}$ for $m>3$. 
We obtain for any fixed $(\calG', \calG'') \in \bfG$,
\begin{align*}
\P_{\calG}(\calN(\calG,2)\cap\{K'_1 = v_1, K''_1 = v_2\}) \leq  \frac{m+2 }{n^2(n-1)}.
\end{align*}
 It is easy to see that for any $v_1$ and $v_2$ which are not adjacent,
the family $\bfG$ has at most $m$ elements. It follows that
\begin{align*}
\P_{\calG}(\calN(\calG,2)\cap\{K_1=v_1, K_2=v_2\}) 
&\leq \sum_{(\calG', \calG'') \in \bfG}
\P_{\calG}(\calN(\calG,2)\cap\{K'_1 = v_1, K''_1 = v_2\})\\
&\leq m \frac{m+2 }{n^2(n-1)} \leq  \frac {m(m+2)} {n(n-1) (m!)^2} .
\end{align*}
Comparing this bound to \eqref{m21.2}, we conclude that $\P_2(K_1=x, K_2=y)$ is maximized
at $(x,y)=(z_1,z_4)$ or $(x,y)=(z_4,z_1)$, for all large $m$.

\medskip

We will now strengthen our estimates assuming that $\dist(v_1,v_2) \geq 8$. 

First suppose that $z_1$ and $z_4$ are in the same of two graphs $\calG'$ and $\calG''$. Without loss of generality, suppose  that they are in $\calG'$.  
Then $\n^{\calG'}_{v_1}(x) \geq n-1$ for all $x$ in the set $\{z_1,z_2,z_3,z_4\}$ (this is true whether $v_1$ belongs to this set or not) so, by \eqref{m21.1},
\begin{align}\label{f18.1}
\P_{\calG'}(K'_1 = v_1) \leq \frac 1 {(n-1)^4}.
\end{align}
Since $z_1$ is in $\calG'$,
the condition $\dist(v_1,v_2) \geq 8$ implies that $v_2=z_k$ for some $k\geq 7$.
It follows that
there are at most $6nm^2$ families $(v_1,\calG',v_2, \calG'')$ such that
$\dist(v_1,v_2) \geq 8$ and $z_1$ and $z_4$ are in the same of the two graphs $\calG'$ and $\calG''$.

Suppose that $z_1$ and $z_4$ are in two different graphs $\calG'$ and $\calG''$. Consider the case when $z_1$ is in $\calG'$. 
Recall that $\dist(v_1,v_2) \geq 8$ implies that $v_2=z_k$ for some $k\geq 7$.
We have
 $\n^{\calG''}_{v_2}(v_2) \geq n$ and $ \n^{\calG''}_{v_2}(z_k) \geq n$ for $k=4,5,6$, so, by \eqref{m21.1},
\begin{align}\label{j26.16}
\P_{\calG''}(\calN(\calG'',1)\cap\{K''_1 = v_2\}) \leq \frac 1 {n^4}.
\end{align}
There are at most $18nm$ families $(v_1,\calG',v_2, \calG'')$ such that
$\dist(v_1,v_2) \geq 8$ and $z_1$ and $z_4$ are in two different graphs $\calG'$ and $\calG''$.

We now analyze the case when there are exactly two peaks in $\calG''$.
Since $\calV''$ contains at least $n$ vertices,  $\P_{\calG''}(K''_1=v_2)\leq 1/n$. If the labeling has exactly two peaks then there are at least $n-2$ vertices 
$y'_1, \dots , y'_{n-2}$ among $y_1, \dots , y_n$, such that $y'_k\in \calV''$ and $y'_k$ is not a peak, for $k=1,\dots, n-2$. 
 Therefore, the label of $z_4$ must be larger than the labels of all $y'_k$'s. 
Recall that $n \leq |\calV''| \leq n+m$.
Assuming that $K''_1 = v_2$, the probability that the label of $z_4$ is  
larger than the labels of all $y'_k$'s  is bounded by $(m+2)/(n-1)$. Hence,
\begin{align}\label{j26.15}
\P_{\calG''}(\calN(\calG'',2) \cap
\{K''_1 = v_2\}) \leq  \frac {m+2}{n(n-1)}.
\end{align}
The graph $\calG'$ has at least $n$ vertices so $\P_{\calG'}(K'_1=v_1)\leq 1/n$. If $v_1 = z_1$ then, using \eqref{j26.15},
\begin{align}\label{j26.17}
\P_{\calG}(\calN(\calG',1) \cap \calN(\calG'',2) \cap
\{K'_1 = v_1 = z_1, K''_1 = v_2\}) \leq  \frac {m+2}{n^2(n-1)}.
\end{align}
There are at most $6m$ families $(v_1,\calG',v_2, \calG'')$ such that
$\dist(v_1,v_2) \geq 8$, $z_1$ and $z_4$ are in two different graphs $\calG'$ and $\calG''$,  and $\calG''$ and $v_1=z_1$ or $v_2 = z_1$.

If $v_1 \ne z_1$ then
 $\n^{\calG'}_{v_1}(v_1) \geq n$ and $ \n^{\calG'}_{v_1}(z_1) \geq n$,  so, by \eqref{m21.1},
\begin{align*}
\P_{\calG'}(\calN(\calG',1)\cap\{K'_1 = v_1\}) \leq \frac 1 {n^2},
\end{align*}
and, in view of \eqref{j26.15},
\begin{align}\label{j26.18}
\P_{\calG}(\calN(\calG',1) \cap \calN(\calG'',2) \cap
\{K'_1 = v_1 , K''_1 = v_2\}) \leq  \frac {m+2}{n^3(n-1)}.
\end{align}
There are at most $18nm$ families $(v_1,\calG',v_2, \calG'')$ such that
$\dist(v_1,v_2) \geq 8$ and $z_1$ and $z_4$ are in two different graphs $\calG'$ and $\calG''$.

We now appeal to Lemma \ref{j24.1} and combine \eqref{f18.1}, \eqref{j26.16}, \eqref{j26.17} and \eqref{j26.18} to conclude that, for large $m$,
\begin{align*}
&\sum_{\dist(v_1,v_2) \geq 8}
\P_{\calG}(\calN(\calG,2) \cap
\{K_1 = v_1 , K_2 = v_2\}) \\
&\leq  
6 n m^2 \frac 1 {(n-1)^4}
+18nm\frac 1 {n^4}
+ 6 m\frac {m+2}{n^2(n-1)} 
+ 18nm 
\frac {m+2}{n^3(n-1)}
\leq \frac{50 m (m+2)}{ (n-1)^3}.
\end{align*}
 Therefore, in view of \eqref{m21.2},
\begin{align*}
\P_2( \dist(K_1,K_2) \geq 8 ) & \leq 
\frac{\sum_{\dist(v_1,v_2) \geq 8}
\P_{\calG}(\calN(\calG,2) \cap
\{K_1 = v_1 , K_2 = v_2\})}
{\P_\calG(\calN(\calG,2) \cap(  \{K_1=z_1, K_2=z_4\}
\cup \{K_2=z_1, K_1=z_4\}))} \\
& \leq \frac{50 m (m+2)}{ (n-1)^3} (n+1)(n+m-2)(m-4)!.
\end{align*}
The last expression goes to 0 when $n\to \infty$ so for any $p>0$, we have $\P_2( \dist(K_1,K_2) \geq 8 ) < p$,
if $n$ is sufficiently large.
\end{proof}

\section{Twin peaks on regular trees}\label{regtrees}

This section investigates twin peaks on $(d+1)$-regular trees of constant depth. 
 
\begin{definition} 
A finite rooted tree will be called a $(d+1)$-regular tree of depth $k$ if the root, denoted $v_*$, has $d+1$  children, every child has further $d$  children and so on, continuing till $k$ generations. That is, the distance of each leaf to the root is exactly $k$, and the degree of every non-leaf vertex is $d+1$. 
\end{definition}

Our main result in this section is not as complete as we wish it had been.
Theorem \ref{y23.1} (i) requires  the assumption that $d\geq 10$; we believe that the assumption could be weakened.  

 We will briefly outline heuristic reasons why finding the location of the two peaks on a $(d+1)$-regular tree seems to be particularly challenging. If there are only two peaks on a tree graph, the graph can be divided into two subtrees such that the labeling restricted to each of the subtrees has only one or two peaks (see Lemma \ref{j24.1}). If a $(d+1)$-regular tree, for $d \geq 2$, of constant depth is divided into two subtrees then it is easy to see that the centroid of the smaller subtree is adjacent to a vertex in the other subtree. Hence, in  view of Proposition \ref{ap1.1}, one would expect the top peak in  the smaller of the two subtrees to be close to the other subtree. This seems to contradict the tendency for  two peaks to be far away on ``well-structured'' graphs (see the remarks preceding  Proposition \ref{y1.2}).

\begin{theorem}\label{y23.1} Consider a random labeling of a $(d+1)$-regular tree $\tree$ of depth $k$. Let the locations of the highest and second highest peaks of the labeling be denoted $K_1$ and $K_2$, respectively. 

(a) If  $d\geq 10 $ and $k\geq2$ then
the function $(y_1,y_2) \to \P_2(K_1= y_1, K_2= y_2)$ takes the maximum value only if  $y_1=v_*$. 

(b) If $d\geq 3$, $k\geq 2$ and $m\geq  \sqrt{8k} $
 then
\begin{align*}
\P_2\left(\min_{i=1,2} \dist(K_i, v_*) \geq m\right) 
\leq 8 \exp(-  (m/2) \log (d-1)).
\end{align*}

(c) For every $d\geq 3$ there exists $c_1>0$ such that for all $k\geq 2$ and $y_1,y_2\in \calV$ with $\dist(y_i,v_*) \geq 2$ for $i=1,2$, we have
\begin{align*}
\P_2(K_1= v_*, K_2= y_1) \geq c_1 \P_2(K_1= v_*, K_2= y_2).
\end{align*}

(d) For every $d\geq 3$ there exist $c$ and $c'$ such that for $k\geq 2$ and $n\geq 2\sqrt{8k} $,
\begin{align*}
\P_2&\left(|\dist(K_1, K_2) - k | \geq n\right) \\
& \leq  8 \exp(- (n/4) \log (d-1))
+ c \exp(-(n/2)\log d)\\
& \leq  c' \exp(- (\sqrt{k}/64) \log d).
\end{align*}

\end{theorem}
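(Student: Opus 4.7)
The strategy is to reduce all four parts of Theorem \ref{y23.1} to estimates on one master formula. Lemma \ref{j24.1} says that every labeling of $\calG$ with exactly two peaks at $y_1,y_2$ corresponds, via the at most two perfect splittings $\{T_1,T_2\}\in\V$ of part (i), plus a bounded set of non-perfect splittings from part (ii), to a pair of labelings with one peak each on disjoint subtrees. Applying \eqref{m21.1} inside each $T_i$ and observing that the probability that the global maximum label falls in $T_1$ is $|T_1|/N$, each perfect-splitting contribution equals
\begin{align*}
\frac{|T_1|/N}{\prod_{z\in T_1}\n_{y_1}^{T_1}(z)\cdot\prod_{z\in T_2}\n_{y_2}^{T_2}(z)},
\end{align*}
while non-perfect splittings contribute strictly less because they impose extra monotonicity along the geodesic $[y_1,y_2]$. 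All four parts are proved by estimating this expression.

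For part (a), removing any single edge of a $(d+1)$-regular tree yields a larger subtree $T_1\ni v_*$ and a smaller subtree $T_2$ rooted at the pendant vertex $v$ of the removed edge; a direct count verifies that the centroid of $T_1$ is $v_*$ and the centroid of $T_2$ is $v$. Proposition \ref{ap1.1} applied inside each subtree then shows the displayed product is minimized precisely at $(y_1,y_2)=(v_*,v)$, and since $|T_1|/N>1/2$ the maximum label sits in $T_1$, forcing $K_1=v_*$. The hypothesis $d\geq 10$ enters only to verify that no non-perfect splitting from Lemma \ref{j24.1}(ii) or non-centroid placement inside a subtree can beat the centroid configuration; the relevant gap scales explicitly in $d$.

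For part (b), a union bound over pairs $(y_1,y_2)$ with $\min_i\dist(y_i,v_*)\ge m$ suffices. For each such pair, iterating the step-by-step ratio \eqref{y24.1} along the geodesic from $y_i$ to $v_*$ shows that $\prod_z\n_{y_i}^{T_i}(z)$ exceeds its centroid value by at least $(d-1)^{m}$ per peak, because each ancestor on the geodesic contributes a subtree-size factor $\geq d-1$ times the corresponding centroid factor. Combined with the combinatorial bound of $O(d^{2k})$ candidate pairs, the hypothesis $m\ge\sqrt{8k}$ is exactly what makes the exponential gain absorb the enumeration and deliver $8\exp(-(m/2)\log(d-1))$. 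For part (c), the ratio $\P_2(K_1=v_*,K_2=y_1)/\P_2(K_1=v_*,K_2=y_2)$ is compared splitting-by-splitting: the factors $\n^{T_2}_{y_i}(z)$ that differ between $y_1$ and $y_2$ are all subtree sizes of the same $(d+1)$-regular tree with $y_i$ at depth $\geq 2$, so the ratios are bounded by a constant depending only on $d$, giving $c_1=c_1(d)>0$.

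For part (d), write $\{|\dist(K_1,K_2)-k|\ge n\}=A_1\cup A_2$ with $A_1=\{\dist\le k-n\}$ and $A_2=\{\dist\ge k+n\}$. Since $\dist(K_1,K_2)\le\dist(K_1,v_*)+\dist(K_2,v_*)$ and each $\dist(K_i,v_*)\le k$, the event $A_2$ forces $\min_i\dist(K_i,v_*)\ge n$, so part (b) with $m=n$ bounds $A_2$ by $8\exp(-(n/2)\log(d-1))$, which is dominated by the first term of the claim. Decompose $A_1=(A_1\cap\{\min\ge n/2\})\cup(A_1\cap\{\min<n/2\})$; the first piece is handled by part (b) with $m=n/2$ (valid since $n\ge 2\sqrt{8k}$). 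On the second piece, the triangle inequality forces $\max_i\dist(K_i,v_*)<k-n/2$, so both peaks sit in the ball of radius $k-n/2$ around $v_*$. Condition on $K_1=v_*$, handling the complementary low-depth event by part (a) and a direct count of vertices at depth $\leq 1$; part (c) shows the conditional distribution of $K_2$ on $\{\dist(\cdot,v_*)\ge 2\}$ is uniform up to the constant $c_1$, so the conditional probability that $K_2$ lies in $\{2\le\dist(\cdot,v_*)\le k-n/2\}$ is at most $c_1^{-1}$ times the ratio of vertex counts, which is $O(d^{-n/2})$. This yields the second term $c\exp(-(n/2)\log d)$; substituting $n=2\sqrt{8k}$ and taking the weaker of the two rates gives $c'\exp(-(\sqrt{k}/64)\log d)$. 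The main obstacle throughout is part (a): the explicit numerical threshold $d\ge 10$ emerges only from a tight combinatorial competition between perfect and non-perfect splittings and is almost certainly not sharp, but proving a sharper threshold would require a finer accounting of the Lemma \ref{j24.1}(ii) corrections.
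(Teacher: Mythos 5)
Your framework---decompose via Lemma~\ref{j24.1}, estimate each splitting term via \eqref{m21.1}, then optimize---is broadly aligned with the paper's, but two of the four parts have genuine gaps.

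The most concrete gap is in part (b). You iterate the elementary ratio $\P_1(K=x_1)\ge(d-1)\P_1(K=x_2)$ from \eqref{y24.1} along the geodesic from $y_i$ to $v_*$, obtaining a gain of $(d-1)^m$ (per peak). A linear-in-$m$ exponent cannot defeat the $O(d^{2k})$ enumeration of pairs when $m$ is only of order $\sqrt{k}$: a bound of the form $d^{2k}(d-1)^{-cm}$ with $m\sim\sqrt{8k}$ diverges as $k\to\infty$. The paper's essential observation is that in the subtree containing $v_*$, the parent--child ratio at depth $j$ from $v_*$ is at least $(d-1)^{j+1}$, not merely $d-1$; iterating this gives $(d-1)^{m(m+1)/2}$ (see \eqref{y29.5}--\eqref{y29.6}, feeding into \eqref{y30.1}). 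This quadratic-in-$m$ decay is exactly what the hypothesis $m\ge\sqrt{8k}$ is calibrated to exploit ($m^2/2\ge 4k$ lets the gain dominate $2k\log d$). Without the depth-dependent ratio your bound does not close.

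Part (a) as you have it is a sketch that omits the two cases that actually carry the weight of the argument. First, once you know some configuration with $v_*$ among the peaks beats $(y_1,y_2)$, you still must compare the two orderings of a fixed pair $\{v_*,y\}$; the paper's Step~6 shows $\P(\calN(\calG,2)\cap\{K_1=v_*,K_2=y\})>\P(\calN(\calG,2)\cap\{K_1=y,K_2=v_*\})$ by a separate argument tracking how many of the top labels are forced into the $y$-subtree, and this is not a consequence of Proposition~\ref{ap1.1}. Second, when $y_1,y_2$ are both neighbors of $v_*$, neither of the paper's cases (A1)/(A2) applies and an explicit polynomial comparison is needed (Step~7, inequality \eqref{y29.2}). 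You also need a device to aggregate the per-splitting estimates; the paper uses a lemma for events with no triple intersection, which introduces a factor of 2, and it is precisely this factor together with the $1/(d-1)$ gain that produces the threshold $d\ge 10$, so the threshold is not just a rounding artifact of ``non-perfect splittings.'' Parts (c) and (d) follow the same outline as the paper's proof and look recoverable once the preceding formulas are made precise, though as written they remain outlines.
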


\begin{proof}(a)
{\it Step 1}.
First we will prove a general estimate
similar to the ``total probability formula.''
Suppose that $A_1, A_2, \dots, A_n$ satisfy the following condition,
\begin{align*}
A_i \cap A_j \cap A_m &= \emptyset \quad \text{  if }  i\ne j \ne m \ne i.
\end{align*}
 These events need not be pairwise disjoint but no triplet has a nonempty intersection.
Suppose that for some $c_1 >0$, some events $B$ and $C$, and all $i$, 
\begin{align*}
\P(B\mid A_i)\leq c_1 \P(C\mid A_i).
\end{align*}
Assume that $B \subset \bigcup_{i=1}^n A_i$.
We will show that 
\begin{align}\label{y23.2}
\P(B) \leq 2 c_1 \P(C).
\end{align}
The proof is contained in the following calculation,
\begin{align*}
\P(B) 
& = \P\left(B \cap \bigcup_{i=1}^n A_i \right)
 = \P\left( \bigcup_{i=1}^n (B \cap A_i) \right)
\leq
 \sum_{i=1}^n \P(B\cap A_i) 
=
 \sum_{i=1}^n \P(B\mid A_i) \P(A_i) \\
&\leq \sum_{i=1}^n c_1\P(C\mid A_i) \P(A_i) 
=\sum_{i=1}^n c_1\P(C\cap A_i)  
=c_1
 \sum_{i=1}^n \E \bone_{C \cap A_i} \\
&=c_1 \E\left (\sum_{i=1}^n \bone_{C \cap A_i} \right) 
\leq c_1\E\left ( 2 \cdot \bone_{C } \right) 
= 2c_1 \P(C).
\end{align*}

\medskip
{\it Step 2}.
If we remove an edge (but retain all vertices) of $\tree$, we obtain two subtrees. Consider a subtree $\tree'$ constructed in this way and let $v_1$ be the vertex in $\tree'$ closest to  the root $v_*$ in $\tree$. If $v_*$ is in $\tree'$ then $v_ 1=v_*$. Consider a random labeling of $\tree'$ conditioned on having exactly one peak. Let $K$ denote the position of the peak. 

We create a branching structure in $\tree'$ by declaring that $v_1$ is the ancestor of all vertices in $\tree'$.
Suppose that a vertex $x_1\in \tree'$ is a parent of $x_2\in \tree'$. 
Recall the notation $\n^{\tree'}_x(y)$ from 
Definition \ref{j24.2}.
It is easy to see that $\n^{\tree'}_{x_2}(x_1) \geq (d-1) \n^{\tree'}_{x_1}(x_2) $. It follows from \eqref{y24.1} that
\begin{align}\label{y27.5}
\P_1(K= x_1) \geq (d-1) \P_1(K=x_2).
\end{align}
By induction, for any $x\in \tree'$ such that $x\ne v_1$,
\begin{align}\label{y24.2}
\P_1(K= v_1) \geq (d-1) \P_1(K=x).
\end{align}

We will also need sharper versions of the above estimates.
Suppose that $v_1 = v_*$, i.e., $v_*\in \tree'$.
Suppose that a vertex $x_1\in \tree'$ is a parent of $x_2\in \tree'$
and the distance from $x_1$ to $v_*$ is $j$. 
Then $\n^{\tree'}_{x_2}(x_1) \geq (d-1)^{j+1} \n^{\tree'}_{x_1}(x_2) $. It follows from \eqref{y24.1} that
\begin{align}\label{y29.5}
\P_1(K= x_1) \geq (d-1)^{j+1} \P_1(K=x_2).
\end{align}
By induction, for any $x\in \tree'$ such that $\dist(x, v_*) = m\geq 1$,
\begin{align}\label{y29.6}
\P_1(K= v_*) \geq \P_1(K=x) \prod_{j=0}^{m-1} (d-1)^{j+1} 
= (d-1)^{m(m+1)/2} \P_1(K=x).
\end{align}

\medskip
{\it Step 3}.
Let $\V$ be the family of all (unordered) pairs of subsets $\{\calV', \calV''\}$ 
of the vertex set $\calV$ such that $\calV'$ and $\calV''$  are the vertex sets of non-empty subtrees, $\calV' \cap \calV'' = \emptyset$ and $\calV' \cup \calV'' = \calV$. In other words, $\V$ is the family of all partitions of $\tree$ into two subtrees that can be generated by removing an edge.
Graphs corresponding to $\calV'$ and $\calV''$ will be denoted $\calG'$ and $\calG''$.

Consider the following conditions on vertices $y_1,y_2\in \calV$,

(A1) The distance between $y_1$ and $y_2$ is  larger than 2.

(A2) The distance between $y_1$ and $y_2$  is equal to 2 and the vertex  between $y_1$ and $y_2$ is not $v_*$. We will denote this vertex by $y_3$.

In case (A1), let $\V(y_1,y_2)\subset \V$ be the family of all $\{\calV', \calV''\}$ such that $y_k\in \calV_k$ and $y_k$ is not adjacent to a vertex in $\calV_{3-k}$, for $k=1,2$. 

 In  case (A2), we define the family $\V(y_1,y_2)$ as a set containing only one pair $\{\calV', \calV''\}$  constructed as follows. If we remove $y_3$ then the  graph is split into two subgraphs $\calV_1$ and $\calV_2$, labeled so that $y_k\in \calV_k$ for $k=1,2$. Suppose that $v_*\in \calV_1$. Then we let $\calV'' = \calV_2 \cup \{y_3\}$ and $\calV' = \calV_1$.
If $v_*\in \calV_2$ then we let $\calV' = \calV_1 \cup \{y_3\}$ and $\calV'' = \calV_2$.

For $\{\calV', \calV''\}\in \V$ and a random labeling,
let $A(\calV', \calV'') = \calN(\calG',1) \cap \calN(\calG'',1)$, i.e.,  $A(\calV', \calV'')$ is the event that the labeling restricted to $\calV_i$ has exactly one peak, for both $i=1$ and $i=2$.

It follows from Lemma \ref{j24.1} that in both cases, (A1) and  (A2), 
\begin{align}\label{u20.1}
\calN(\calG,2) \cap \{K_1=y_1, K_2 = y_2\} \subset \bigcup_{\{\calV', \calV''\} \in \V(y_1,y_2)  }
A(\calV', \calV'').
\end{align}

\medskip
{\it Step 4}.
Let
\begin{align*}
B_1 &= \calN(\calG,2) \cap \{K_1=y_1, K_2 = y_2\} ,\qquad
B_2 = \calN(\calG,2) \cap \{K_1=y_2, K_2 = y_1\} .
\end{align*}

Assume that (A1) or (A2) holds.
Suppose that 
$\{\calV', \calV''\}\in \V(y_1,y_2)$. 
Recall that $y_1 \in \calV'$.
We will show  that if $v_* \in \calV'$  then
\begin{align}\label{y26.1}
\P(B_1\cap A(\calV', \calV'')) \geq \P(B_2\cap A(\calV', \calV'')).
\end{align}
Let $F$ be the event that the highest label is given to a vertex in $\calV'$.
Let $K'_1$ be the location of the highest peak in $\calG'$ and let $K''_1$ be the location of the highest peak in $\calG''$.

Since $v_* \in \calV'$, we have $|\calV'| > |\calV''|$, and thus, $p_1 :=\P(F) > 1/2$. Note that
\begin{align*}
\P&(\calN(\calG',1) \cap \calN(\calG'',1) \cap \{K'_1 = y_1, K''_1 =y_2\}
\mid F)\\
&=
\P(\calN(\calG',1) \cap \calN(\calG'',1) \cap \{K'_1 = y_1, K''_1 =y_2\}
\mid F^c),
\end{align*}
and call the common value of the two conditional probabilities $p_2$.

Since $y_2$ is not adjacent to a vertex in $\calV'$, the event $\{B_1\cap A(\calV', \calV'')\}$ is the same as the intersection of the events (i) the highest label is given to a vertex in $\calV'$, (ii) there is a single peak at $y_1$ in $\calV'$ and (iii) there is a single peak at $y_2$ in $\calV''$.
Hence $\P(B_1\cap A(\calV', \calV'')) = p_1 p_2$.

If the event $\{B_2\cap A(\calV', \calV'')\}$ holds then the  intersection of the following events also holds: (i)  the highest label is given to a vertex in $\calV''$, (ii) there is a single peak at $y_1$ in $\calV'$ and (iii) there is a single peak at $y_2$ in $\calV''$ (but $\{B_2\cap A(\calV', \calV'')\}$ is not equal to the intersection of (i)-(iii)). This implies that
$\P(B_2\cap A(\calV', \calV'')) \leq (1-p_1) p_2$. Since $p_1> 1/2$, 
\begin{align*}
\P(B_1\cap A(\calV', \calV''))= p_1 p_2 > (1-p_1) p_2 \geq \P(B_2\cap A(\calV', \calV'')),
\end{align*}
so \eqref{y26.1} is proved. 

\medskip
{\it Step 5}.
Consider  $y_1,y_2\in \calV$ such that either (A1) or (A2) is satisfied  and none of these vertices is the root.
Let
\begin{alignat}{2}
&B_1 = \calN(\calG,2) \cap \{K_1=y_1, K_2 = y_2\} ,&&\qquad
C_1 = \calN(\calG,2) \cap \{K_1=v_*, K_2 = y_2\} ,\nonumber\\
&B_2 = \calN(\calG,2) \cap \{K_1=y_2, K_2 = y_1\} ,&&\qquad
C_2 = \calN(\calG,2) \cap  \{K_1=v_*, K_2 = y_1\} ,\nonumber\\
&B = B_1 \cup B_2,&&\qquad C= C_1 \cup C_2.
\label{y24.5} 
\end{alignat}
Consider the event $A(\calV', \calV'')$ for some $\{\calV', \calV''\}\in \V(y_1,y_2)$ and suppose that $v_* \in \calV'$.
If we condition on $ A(\calV', \calV'')$,
the distribution of the order statistics of labels in $\calV'$ is independent of the values of the labels in $\calV''$. Hence, \eqref{y24.2} shows that conditional on $ A(\calV', \calV'')$, the probability that the only peak in $\calV'$ is at $v_*$ is at least $d-1$ times larger than the probability that it is at $y_1$. 
Since $y_2$ is not adjacent to a vertex in $\calV'$,
if the only peak in $\calV''$ is at $y_2$, the only peak in $\calV'$ is at $v_*$ and the highest label is in $\calV'$ then $C_1 $ holds. This and \eqref{y26.1} imply that,
\begin{align}\label{y24.4}
\P(B\mid A(\calV', \calV'')) \leq 
2 \P(B_1\mid A(\calV', \calV'')) \leq \frac 2 {d-1}  \P(C_1\mid A(\calV', \calV'')).
\end{align}
By symmetry, if  $v_* \in \calV''$ then
\begin{align}\label{y26.2}
\P(B\mid A(\calV', \calV'')) \leq 
2 \P(B_2\mid A(\calV', \calV'')) \leq \frac 2 {d-1} \P(C_2\mid A(\calV', \calV'')).
\end{align}
It follows from  \eqref{y24.4}-\eqref{y26.2} that
for all $(\calV', \calV'')\in \V(y_1,y_2)$,
\begin{align}\label{y26.3}
\P(B\mid A(\calV', \calV''))   \leq \frac 2 {d-1} \P(C\mid A(\calV', \calV'')).
\end{align}
We will apply Step 1 with the family $\{A(\calV', \calV'')\}_{(\calV', \calV'')\in \V(y_1,y_2)}$ (for fixed $y_1$ and $y_2$) playing the role of the family $\{A_k\}$.
In view of \eqref{u20.1}, we see that \eqref{y23.2} and \eqref{y26.3} imply that
\begin{align*}
\P(B) \leq \frac 4 {d-1} \P(C).
\end{align*}
Recall events  $C_1$ and $C_2$ from the definition \eqref{y24.5} of $C$.
We have assumed that $d\geq 10$.
The last estimate implies that for at least one $i$ we must have
\begin{align}\label{y27.12}
\P(C_i) \geq \frac {d-1} 8 \P(B) > \P(B) \geq \P(\calN(\calG,2) \cap
\{K_1=y_1, K_2 = y_2\}).
\end{align}

\medskip

In preparation for  proofs of other  parts of the theorem, we present a stronger version of the last estimate under stronger assumptions. Suppose that for some $m\geq 2$ we have $\min_{i=1,2} \dist(y_i, v_*) \geq m $. 
Assume that $\{\calV', \calV''\}\in \V(y_1,y_2)$ and $v_* \in \calV'$.
If we condition on $ A(\calV', \calV'')$,
the distribution of the order statistics of labels in $\calV'$
(i.e., the joint distribution of the location of the highest label in $\calV'$, the location of the second highest label in $\calV'$, etc.) is independent of the values of the labels in $\calV'$.
 Hence, \eqref{y29.6} shows that conditional on $ A(\calV', \calV'')$, the probability that the only peak in $\calV'$ is at $v_*$ is at least $(d-1)^{-m(m+1)/2}$ times larger than the probability that it is at $y_1$. 
If the only peak in $\calV''$ is at $y_2$, the only peak in $\calV'$ is at $v_*$ and the highest label is in $\calV'$ then $K_1 = v_*$. We obtain using \eqref{y26.1},
\begin{align}\label{y29.8}
\P(B\mid A(\calV', \calV'')) \leq 
2 \P(B_1\mid A(\calV', \calV'')) \leq  2 (d-1)^{-m(m+1)/2} \P(C_1\mid A(\calV', \calV'')).
\end{align}
By symmetry, if $v_* \in \calV''$ then
\begin{align}\label{y29.9}
\P(B\mid A(\calV', \calV'')) \leq 
2 \P(B_2\mid A(\calV', \calV'')) \leq 2 (d-1)^{-m(m+1)/2} \P(C_2\mid A(\calV', \calV'')).
\end{align}
It follows from  \eqref{y29.8}-\eqref{y29.9} that
for all $\{\calV', \calV''\}\in \V(y_1,y_2)$,
\begin{align}\label{y29.10}
\P(B\mid A(\calV', \calV''))   \leq 2 (d-1)^{-m(m+1)/2} \P(C\mid A(\calV', \calV'')).
\end{align}
In view of \eqref{y23.2}, \eqref{u20.1} and \eqref{y29.10},
\begin{align}\label{y30.1}
\P(B) \leq 4 (d-1)^{-m(m+1)/2} \P(C)\leq 4 (d-1)^{-m(m+1)/2}
\P(\calN(\calG,2)).
\end{align}

\medskip
{\it Step 6}.
We will show that
 for any $y$ whose distance from the root is greater than or equal to 2,
\begin{align}\label{y27.11}
\P(\calN(\calG,2) \cap \{K_1= v_*, K_2= y\}) 
> \P(\calN(\calG,2) \cap \{K_1= y, K_2= v_*\}).
\end{align}

Let $\calV_i$, $1\leq i \leq d+1$, be the subtrees obtained by removing the root from $\tree$. Without loss of generality, we assume that $y\in\calV_1$.

Let $N = |\calV|$.
A random labeling of $\tree$ can be generated in the following way. First, we generate independent random labelings $L_j$ of $\calV_j$, for $1\leq j \leq d+1$. Then we assign a random label from the set $[N]$ to $v_*$ and divide randomly the labels remaining  in the set $[N]$ into $d+1$ (unordered) families $\Lambda_j$ of equal sizes,
independently from $L_j$'s. Next, labels in $\Lambda_j$ are placed at vertices in $\calV_j$ in such a way that the order structure is the same as for $L_j$, for every $j$. Let $L$ be the name of the resulting labeling of $\calG$.

Let $F_1$ denote the event that

(i) For $j\ne 1$, the labeling $L_j$ has only one peak and it is located at a vertex adjacent to $v_*$, and

(ii) $L_1$ has either only one peak at $y$ or it has exactly two peaks, one at $y$ and one at the neighbor of $v_*$.

It follows from Lemma \ref{j24.1} that
\begin{align}\label{j30.2}
\calN(\calG,2) \cap \{K_1= v_*, K_2= y\} &\subset F_1
\quad \text {  and}\quad
\calN(\calG,2) \cap \{K_1= y, K_2= v_*\}\subset F_1.
\end{align}

If $L(v_*) = N$ then (i)-(ii) are not only necessary but also sufficient conditions for the event that there are exactly two peaks at $v_*$ and $y$.
More formally,
\begin{align}\label{j30.1}
\{L(v_*) = N\} \cap F_1=
\{L(v_*) = N\} \cap
\calN(\calG,2) \cap \{K_1= v_*, K_2= y\} .
\end{align}

If $F_1$ holds, $L(v_*) = N-j$  and $L$ has exactly two peaks, with one of them at $v_*$ and the other in $\calV_1$, 
then all labels $N, N-1, \dots, N-j+1$ must be in $\calV_1$. Otherwise, because of (i), at least one of the numbers $N, N-1, \dots, N-j+1$ would be the label of a vertex adjacent to $v_*$ and, therefore, $v_*$ would not be a peak. Given $\{L(v_*) = N-j\} \cap F_1$, the probability that all labels $N, N-1, \dots, N-j+1$ are in $\calV_1$ is less than $1/(d+1)^j$ for $j>1$ and equal to $1/(d+1)$ for $j=1$.
We use \eqref{j30.2} and \eqref{j30.1} in the following calculation, 
\begin{align*}
\P&(\calN(\calG,2) \cap \{K_1= y, K_2= v_*\})
=\P(\calN(\calG,2) \cap \{K_1= y, K_2= v_*\} \cap F_1)\\
&= \sum_{j=1}^{N-1}  \P(\calN(\calG,2) \cap \{K_1= y, K_2= v_*,L(v_*) = N-j \} \cap F_1)\\
&= \sum_{j=1}^{N-1}  \P\Big(\calN(\calG,2) \cap F_1 \\
&\qquad\cap \{K_1= y, K_2= v_*,L(v_*) = N-j, \{N, N-1, \dots, N-j+1\}\subset L(\calV_1) \} \Big)\\
&\leq \sum_{j=1}^{N-1}  \P(
 \{L(v_*) = N-j, \{N, N-1, \dots, N-j+1\}\subset L(\calV_1) \}  \cap F_1)\\
&\leq \sum_{j=1}^{N-1} \frac 1 {(d+1)^j} \P( \{L(v_*) = N-j \}  \cap F_1)\\
&= \sum_{j=1}^{N-1} \frac 1 {(d+1)^j} \P( \{L(v_*) = N \}  \cap F_1)
< \frac 1 d \P( \{L(v_*) = N \}  \cap F_1)\\
& = \frac 1 d 
\P(\{L(v_*) = N\} \cap
\calN(\calG,2) \cap \{K_1= v_*, K_2= y\})\\
&= \frac 1 d 
\P(\calN(\calG,2) \cap \{K_1= v_*, K_2= y\}).
\end{align*}
Since $1/d < 1$ for $d\geq 2$, we conclude that \eqref{y27.11} holds.

\medskip
{\it Step 7}.
It follows from \eqref{y27.12} and \eqref{y27.11} that $\P(\calN(\calG,2) \cap \{K_1= y_1, K_2= y_2\})$ is maximized either when $y_1 = v_*$ or if $y_1$ and $y_2$ are neighbors of $v_*$.

Assume that $y_1$ and $y_2$ are neighbors of $v_*$ and $y_3$ is a neighbor of $y_2$, but $y_3 \ne v_*$. We will show that 
\begin{align}\label{y29.1}
\P(\calN(\calG,2) \cap \{K_1= v_*, K_2= y_3\}) 
> \P(\calN(\calG,2) \cap \{K_1= y_1, K_2= y_2\}).
\end{align}

The proof will use the same ideas as  the proof of Proposition \ref{ap1.1}. Recall the notation 
from Definition \ref{j24.2}.

The probability $\P(\calN(\calG,2) \cap \{K_1= v_*, K_2= y_3\})$ is the product of the following five factors, (i)-(v).

(i) The probability that the label of $v_*$ is the highest among all labels. This probability  is equal to $1/|\calV|$, where $\calV$ is the vertex set of $\tree$.

(ii) If we remove $v_*$ from $\tree$, we obtain $d+1$ subtrees. Let $\tree'$ be the subtree which contains $y_3$. Its vertex set will be denoted $\calV'$. The second factor is the probability that $y_3$ has the largest label in $\tree'$. This probability is equal to $1/|\calV'|$.

(iii) The third factor is the probability that the label of $y_2$ is larger than the labels of all of its descendants in $\tree'$, if we consider  $y_3$ to be the root of $\tree'$. This  probability is equal to $1/\n_{y_3}^{\tree'}(y_2)$.

(iv) The fourth factor is the probability that the label of $y_1$ is larger than the labels of all of its descendants in $\tree$, with the usual root  $v_*$. This  probability is equal to $1/\n^\calG_{v_*}(y_1)$.

(v) Just like in the proof of Proposition \ref{ap1.1}, we have to multiply by probabilities corresponding to all other vertices in $\tree$,
i.e., the last factor is $\prod_{x\ne v_*,y_1,y_2,y_3} 1/\n^\calG_{v_*}(x)$.

\smallskip
The probability $\P(\calN(\calG,2) \cap \{K_1= y_1, K_2= y_2\})$ is the product of the following five factors, (1)-(5).

(1) The probability that the label of $y_1$ is the highest among all labels. This probability  is equal to $1/|\calV|$, where $\calV$ is the vertex set of $\tree$.

(2) If we remove $y_1$ from $\tree$, we obtain $d+1$ subtrees. Let $\tree''$ be the subtree which contains $y_2$. Its vertex set will be denoted $\calV''$. The second factor is the probability that $y_2$ has the largest label in $\tree''$. This probability is equal to $1/|\calV''|$.

(3) The third factor is the probability that the label of $v_*$ is larger than the labels of all of its descendants in $\tree''$, if we consider  $y_2$ to be the root of $\tree''$. This  probability is equal to $1/\n_{y_2}^{\tree''}(v_*)$.

(4) The fourth factor is the probability that the label of $y_3$ is larger than the labels of all of its descendants in $\tree$, with the usual root  $v_*$. This  probability is equal to $1/\n^\calG_{v_*}(y_3)$.

(5) We have to multiply by probabilities corresponding to all other vertices in $\tree$,
i.e., the last factor is $\prod_{x\ne v_*,y_1,y_2,y_3} 1/\n^\calG_{v_*}(x)$.

Note that the factors described in (i) and (1) are identical. The same applies to (v) and (5). Hence,  it will suffice to show that
\begin{align}\label{y28.1}
\frac 1 {|\calV'|}\ 
\frac 1 {\n_{y_3}^{\tree'}(y_2)} \ 
\frac 1 {\n^\calG_{v_*}(y_1)}
>
\frac 1 {|\calV''|} \ \frac 1 {\n_{y_2}^{\tree''}(v_*)} \ \frac 1 {\n^\calG_{v_*}(y_3)}.
\end{align}
The above inequality is equivalent to each of the following inequalities.
\begin{align*}
&|\calV'| \cdot \n_{y_3}^{\tree'}(y_2) \cdot \n^\calG_{v_*}(y_1)
< |\calV''| \cdot \n_{y_2}^{\tree''}(v_*) \cdot \n^\calG_{v_*}(y_3),\\
&\frac { d^k -1} {d-1}
\left( \frac { d^{k-1} -1} {d-1}\cdot (d-1) + 1 \right)
\frac { d^k -1} {d-1} \\
&\qquad\qquad<
\left( \frac { d^{k} -1} {d-1} \cdot d + 1 \right)
\left( \frac { d^{k} -1} {d-1} \cdot (d-1) + 1 \right)
\frac { d^{k-1} -1} {d-1},\\
&\frac{(d^k -1)  d^{k-1} (d^k -1) }
{(d-1)^2} <
\frac{(( d^{k} -1)d +d -1)  d^{k} ( d^{k-1} -1)}
{(d-1)^2},\\
&(d^k -1)   (d^k -1) <
( d^{k+1}  -1) d ( d^{k-1} -1),
\end{align*}
\begin{align}\label{y29.2}
- (d-1) (d^k(d^k - d -1) +1) <0.
\end{align}
It is easy to see that
\eqref{y29.2} is true for all $d\geq 3$ and $k\geq 2$. It follows that \eqref{y28.1} is also true. This completes the proof of \eqref{y29.1} and, therefore, the proof of part (a) of the theorem.

\medskip
(b)
We have
\begin{align}\label{j23.1}
\P&\left(\calN(\calG,2) \cap\left\{\min_{i=1,2} \dist(K_i, v_*) \geq m \right\}\right) \\
&= \frac 12 \sum_{y_1,y_2 \atop \min_{i=1,2} \dist(y_i, v_*) \geq m}
\P(\calN(\calG,2) \cap ( \{K_1=y_1, K_2 = y_2\} \cup
 \{K_1=y_2, K_2 = y_1\})).\nonumber
\end{align}
The factor $1/2$ on the second line appears because we include both $(y_1,y_2)$ and $(y_2,y_1)$ in the sum.

The number of vertices in $\tree$ is $(d+1) (d^{k}-1)/(d-1)+1$ so, assuming that $d\geq 3$, the number of pairs $(y_1,y_2)$ is bounded by 
\begin{align}\label{j23.2}
\left(\frac{(d+1) (d^{k}-1)}{d-1}+1\right)\left(\frac{(d+1) (d^{k}-1)}{d-1}\right) \leq 4d^{2k}.
\end{align}

It follows from \eqref{y30.1} that for $m\geq 2$,
\begin{align*}
&\P(\calN(\calG,2) \cap ( \{K_1=y_1, K_2 = y_2\} \cup
 \{K_1=y_2, K_2 = y_1\}))\\
&\leq  4 (d-1)^{-m(m+1)/2}\P(\calN(\calG,2)).
\end{align*}
This, \eqref{j23.1} and \eqref{j23.2} imply that
\begin{align}\label{j23.3}
\P\left(\calN(\calG,2) \cap\left\{\min_{i=1,2} \dist(K_i, v_*) \geq m \right\}\right) \leq 2 d^{2k} 4 (d-1)^{-m(m+1)/2}\P(\calN(\calG,2)) .
\end{align}

If $d\ge3 $ and $k \leq\frac 18 m^2 $ then $k \leq\frac 14 m^2 \log (d-1)/\log d$ and
\begin{align*}
2 d^{2k} 4 (d-1)^{-m(m+1)/2} 
&= 8 \exp ( \log (d^{2k}  (d-1)^{-m(m+1)/2}))\\
&= 8 \exp ( 2k \log d  -\frac 12 m(m+1) \log (d-1))\\
&\leq 8 \exp (   -(m/2) \log (d-1)).
\end{align*}
We use this and \eqref{j23.3} to see that
 if $d\geq 3$, $k\geq 2$ and
$ m \geq \sqrt{8k} $
 then
\begin{align*}
\P\left(\calN(\calG,2) \cap\left\{\min_{i=1,2} \dist(K_i, v_*) \geq m \right\}\right) &\leq 
8 \exp (   -(m/2) \log (d-1))\P(\calN(\calG,2)) .
\end{align*}
This proves  part (b) of the theorem.

\medskip
(c) The argument will be based on the comparison of $\P(\calN(\calG,2) \cap \{K_1=v_*, K_2 = y\})$ and $\P(\calN(\calG,1) \cap\{ K_1=v_*\})$. 

 Suppose that $y\in\calV$ and  $m:=\dist(y, v_*) \geq 2$.
Let $\Gamma$ be  the geodesic between $y$ and $v_*$ and let
 $z_j \in \Gamma$, $0\leq j \leq m$,  be such that $\dist(z_j, y) = j$. Note that it is possible that $z_2=v_*$.
Suppose that $\calV'$ and $\calV''$ are obtained by removing the edge between $z_1$ and $z_2$ from $\tree$; we assume without loss of generality  that
 $v_*\in\calV'$. The corresponding graphs will be denoted $\calG'$ and $\calG''$.
If  a labeling $L$ of $\tree$ has the following properties: (i) $L$ restricted to $\calV'$ has only one peak at $v_*$,  (ii) $L$ restricted to $\calV''$ has only one peak at $y$, and  (iii) the largest label is in $\calV'$, then $L$ has exactly two peaks at $v_*$ and $y$. Since $v_* \in \calV'$, we have $|\calV'| > |\calV''|$ and so the probability that the largest label is assigned to a vertex in $\calV'$ is greater than $1/2$. The distribution of the order statistics of $L$ restricted to $\calV'$ is independent of this event, and the same holds for $\calV''$.
These observations and \eqref{m21.1} imply that
\begin{align*}
\frac{\P(\calN(\calG,2) \cap\{  K_1=v_*, K_2 = y\})}{\P(\calN(\calG,1) \cap\{  K_1=v_*\})}
\geq
\frac{ \frac 1 2 \prod _{x\in \calV'} (1/ \n_{v_*}^{\calG'}(x)
\prod _{x\in \calV''} (1 / \n_{y}^{\calG''}(x))}
{ \prod _{x\in \calV} (1 / \n^\calG_{v_*}(x))}.
\end{align*}
Note that the factors corresponding to $x\notin \Gamma$ are identical in the numerator and denominator, so
\begin{align}\label{y31.1}
\frac{\P(\calN(\calG,2) \cap\{  K_1=v_*, K_2 = y\})}{\P(\calN(\calG,1) \cap\{  K_1=v_*\})}
\geq
\frac 1 2 
\prod_{j=2}^m
\frac{ \n^\calG_{v_*}(z_j) }{ \n_{v_*}^{\calG'}(z_j)}
\prod_{j=0}^1
\frac{ \n^\calG_{v_*}(z_j) }{ \n_{y}^{\calG''}(z_j)}.
\end{align}
Since $m\leq k$,
\begin{align}\nonumber
\frac{ \n^\calG_{v_*}(z_0) }{ \n_{y}^{\calG''}(z_0)}
&=
 \n^\calG_{v_*}(y) ( \n_{y}^{\calG''}(y))^{-1}
= \frac{ d(d^{k-m+1} -1)}{d-1}
\left(\frac{d( d^{k-m+2} -1)}{d-1}\right)^{-1}\\
& = d^{-1}( 1- d^{-(k-m+1)}) (1 - d^{-(k-m+2)})^{-1} 
\geq d^{-1}( 1- d^{-(k-m+1)})\geq \frac {d-1}{d^2}.\label{y31.2}
\end{align}
The set of vertices corresponding to $ \n^\calG_{v_*}(z_1) $ contains the set corresponding to $ \n_{y}^{\calG''}(z_1)$ so we have the following 
bound for the second factor in \eqref{y31.1} corresponding to $\calV''$,
\begin{align}\label{y31.3}
\frac{ \n^\calG_{v_*}(z_1) }{ \n_{y}^{\calG''}(z_1)}
\geq 1.
\end{align}

Since $\calV' \subset \calV$,  $\n^\calG_{v_*}(z_j) \geq \n_{v_*}^{\calG'}(z_j)$, for all $j$.
This implies that
\begin{align}\label{y31.4}
\prod_{j=2}^m
\frac{ \n^\calG_{v_*}(z_j) }{ \n_{v_*}^{\calG'}(z_j)}
\geq 1.
\end{align}
We combine \eqref{y31.1}-\eqref{y31.4} to obtain for $d\geq 3$,
\begin{align}\label{y31.5}
\frac{\P(\calN(\calG,2) \cap\{  K_1=v_*, K_2 = y\})}{\P(\calN(\calG,1) \cap\{  K_1=v_*\})}
\geq
\frac 1 2 
\frac {d-1}{d^2}
 \geq \frac 1 {4 d} .
\end{align}

We will now derive an upper bound.
For $1 \leq j \leq m-2$,
let $\calV^j_1$ and $\calV^j_2$ be obtained by removing the edge between $z_j$ and $z_{j+1}$ from $\tree$; we assume that
 $v_*\in\calV^j_1$. The corresponding graphs will be called $\calG^j_1$ and $\calG^j_2$.
Recall the following notation, $A(\calV^j_1, \calV^j_2) = \calN(\calG^j_1,1) \cap \calN(\calG^j_2,1)$.
Let $K^{j,i}_1$ be the location of the highest peak in $\calG^j_i$.
It follows from the proof of Lemma \ref{j24.1} that 
\begin{align*}
\calN(\calG,2) \cap \{K_1=v_*, K_2 = y\} 
= \bigcup_{j=1}^{m-2} (A(\calV^j_1, \calV^j_2) \cap \{K^{j,1}_1=v_*, K^{j,2}_1  = y\} ).
\end{align*}
These observations and \eqref{m21.1} imply that
\begin{align*}
\frac{\P(\calN(\calG,2) \cap\{  K_1=v_*, K_2 = y\})}{\P(\calN(\calG,1) \cap\{  K_1=v_*\})}
\leq
\frac{\sum_{i=1}^{m-2}\left( \prod _{x\in \calV^i_1}( 1 / \n_{v_*}^{\calG^i_1}(x))
\prod _{x\in \calV^i_2} (1 / \n_{y}^{\calG^i_2}(x)) \right)}
{ \prod _{x\in \calV} (1 / \n^\calG_{v_*}(x))}.
\end{align*}

The factors corresponding to $x\notin \Gamma$ are identical in the numerator and denominator, so
\begin{align}\label{y31.6}
\frac{\P(\calN(\calG,2) \cap\{  K_1=v_*, K_2 = y\})}{\P(\calN(\calG,1) \cap\{  K_1=v_*\})}
\leq
\sum_{i=1}^{m-2}
\prod_{j=i+1}^m
\frac{ \n^\calG_{v_*}(z_j) }{ \n_{v_*}^{\calG^i_1}(z_j)}
\prod_{j=0}^i
\frac{ \n^\calG_{v_*}(z_j) }{ \n_{y}^{\calG^i_2}(z_j)}.
\end{align}
For $m\leq k$,  $d\geq 3$ and $i\geq 1$,
\begin{align*}
\frac{ \n^\calG_{v_*}(z_0) }{ \n_{y}^{\calG^i_2}(z_0)}
&
= \frac{d( d^{k-m+1} -1)}{d-1}
\left(\frac{d( d^{k-m+1+i} -1)}{d-1}\right)^{-1}\\
& = d^{-i}( 1- d^{-(k-m+1)}) (1 - d^{-(k-m+1+i)})^{-1} \\
&\leq d^{-i}( 1- d^{-(k-m+1)}) (1 +2  d^{-(k-m+1+i)})
\leq 3 d^{-i}.
\end{align*}
We estimate the other factors corresponding to $\calV^i_2$ as follows,
for $1\leq j \leq i$,
\begin{align*}
\frac{ \n^\calG_{v_*}(z_j) }{ \n_{y}^{\calG^i_2}(z_j)}
&= \frac{d( d^{k-m+1+j} -1)}{d-1}
\left(\frac{d( d^{k-m+1+i} -1)}{d-1} - \frac{d( d^{k-m+j} -1)}{d-1}\right)^{-1}\\
& = d^{j-i}( 1- d^{-(k-m+1+j)}) 
(1 - d^{j-i-1})^{-1} 
\leq d^{j-i} (1 +2 d^{j-i-1})
\leq 3 d^{j-i}.\nonumber
\end{align*}
Hence
\begin{align}\label{y31.8}
\prod_{j=0}^i \frac{ \n^\calG_{v_*}(z_j) }{ \n_{y}^{\calG^i_2}(z_j)}
\leq \prod_{j=0}^i 3 d^{j-i} = 3^{i+1} d^{- i(i+1)/2}.
\end{align}

Next we deal with factors corresponding to $\calV^i_1$, for $i+1 \leq j \leq m$,
\begin{align*}
\frac{ \n^\calG_{v_*}(z_j) }{ \n_{v_*}^{\calG^i_1}(z_j)}
&
= \frac{d( d^{k-m+1+j} -1)}{d-1}
\left(\frac{d( d^{k-m+1+j} -1)}{d-1} - \frac{ d(d^{k-m+1+i} -1)}{d-1}\right)^{-1}\\
& = ( 1- d^{-(k-m+1+j)}) (1 - d^{i-j})^{-1} 
\leq  1+2 d^{i-j} .
\end{align*}
This implies that
\begin{align}\label{y31.9}
\prod_{j=i+1}^m
\frac{ \n^\calG_{v_*}(z_j) }{ \n_{v_*}^{\calG^i_1}(z_j)}
&\leq 
\prod_{j=i+1}^m (1+2 d^{i-j})
= \exp \left( \sum _{j=i+1}^m \log(1+2 d^{i-j})\right)
\leq \exp \left( \sum _{j=i+1}^m 2 d^{i-j}\right)\\
&\leq \exp \left( \sum _{j=i+1}^\infty 2 d^{i-j}\right)
= \exp \left( \frac 2 {d-1}\right).
\nonumber
\end{align}
We combine \eqref{y31.6}-\eqref{y31.9} to obtain for $d\geq 3$,
\begin{align}\label{y31.10}
\frac{\P(\calN(\calG,2) \cap\{  K_1=v_*, K_2 = y\})}{\P(\calN(\calG,1) \cap\{  K_1=v_*\})}
&\leq
\sum_{i=1}^{m-2} \exp \left( \frac 2 {d-1}\right) 
3^{i+1} d^{- i(i+1)/2}\\
&\leq
\sum_{i=1}^{\infty} \exp \left( \frac 2 {d-1}\right) 
3^{i+1} d^{- i(i+1)/2} < \infty.\nonumber
\end{align}
Since the lower and upper bounds in \eqref{y31.5} and \eqref{y31.10} do not depend
on $m = \dist(y, v_*)$, part (c) of the theorem follows.

\medskip
(d) 
The number of $y\in \calV$ with $\dist(y, v_*) \geq 2$ is equal 
to $(d^k-d) (d+1)/(d-1)$. Since 
\begin{align*}
d^k/2 \leq (d^k-d) (d+1)/(d-1) \leq 2 d^k,
\end{align*}
for $d\geq 3$ and $k\geq 2$, part (c) of the lemma implies that for every $d\geq 3$ there exists $c_2$ such that for all $k\geq 2$ and $y\in \calV$ with $\dist(y, v_*) \geq 2$, we have
\begin{align}\label{j1.1}
d^{-k}/c_2 \leq \P_2( K_1=v_*, K_2 = y) \leq c_2 d^{-k}. 
\end{align}

Consider $y_1,y_2\in \calV$ and let $m_i = \dist(y_i, v_*)$ for $i=1,2$. Let $m=\min(m_1,m_2)$. We use \eqref{y30.1} and \eqref{j1.1} to see that
\begin{align*}
&\P(\calN(\calG,2) \cap (\{K_1=y_1, K_2 = y_2\} \cup \{K_1=y_2, K_2 = y_1\})) \\
&\leq 4 (d-1)^{-m(m+1)/2}
\P(\calN(\calG,2) \cap( \{K_1=v_*, K_2 = y_2\} \cup \{K_1=v_*, K_2 = y_1\}))\nonumber \\
&= 4 (d-1)^{-m(m+1)/2}
\P_2( \{K_1=v_*, K_2 = y_2\} \cup \{K_1=v_*, K_2 = y_1\})
\P(\calN(\calG,2) 
\nonumber \\
&\leq 8 (d-1)^{-m(m+1)/2} c_2 d^{-k}\P(\calN(\calG,2) .\nonumber
\end{align*}
This implies that for some $c_3$ depending on $d$ but not on $k$,
\begin{align}\label{j1.3}
&\P_2( K_1 = y_1, \dist(K_2, v_*) \leq m_1) =
\sum_{j=0}^{m_1}
\sum_{\substack{\{y_2:\  \dist(y_2, v_*) =j,\\  \dist (y_1,y_2) \geq 2\} }}
\P_2( K_1=y_1, K_2 = y_2)\\
& \leq \sum_{j=0}^{m_1}
(d+1) d^{j-1}
8 (d-1)^{-j(j+1)/2} c_2 d^{-k}\leq c_3 d^{-k}.\nonumber
\end{align}
A similar argument yields
\begin{align}\label{j1.4}
\P_2(  K_2 = y_2, \dist(K_1, v_*) \leq m_2) 
\leq c_3 d^{-k}.
\end{align}
We use \eqref{j1.3} and \eqref{j1.4} to obtain
\begin{align}\label{j1.5}
\P_2&\left(  \max_{i=1,2} \dist(K_i, v_*) \leq k-n/2 \right)\\
&=
\sum_{j=0} ^{k-n/2}
\sum_{ \{y_1:\ \dist(y_1, v_*) = j\} }
\P_2( K_1 = y_1, \dist(K_2, v_*) \leq m_1) \nonumber \\
&\quad +
\sum_{j=0} ^{k-n/2}
\sum_{ \{y_2:\ \dist(y_2, v_*) = j\} }
\P_2( K_2 = y_2, \dist(K_1, v_*) \leq m_2) \nonumber \\
& \leq \sum_{j=0} ^{k-n/2}
\sum_{ \{y:\ \dist(y, v_*) = j\} }
2 c_3 d^{-k}
\leq \left( 1 +\sum_{j=1} ^{k-n/2}
(d+1) d^{j-1}\right)
2 c_3 d^{-k} \nonumber \\
& = \left( 1 + (d+1) \frac{ d^{k-n/2} -1 }{d-1} \right)  2 c_3 d^{-k}
\leq c_4 d^{-n/2}.  \nonumber  
\end{align}

Suppose that $|\dist(K_1, K_2) - k | \geq n$. It is possible that 
$\min_{i=1,2} \dist(K_i, v_*) \geq n/2$. 
Suppose that the last condition is not satisfied and consider the case when 
$\dist(K_1, v_*) < n/2$. If $\dist(K_2, v_*) > k-n/2$ then, by the triangle inequality, $\dist(K_1, K_2) \geq k - n$, contradicting the assumption that 
$|\dist(K_1, K_2) - k | \geq n$. It follows that 
$\dist(K_2, v_*) \leq k-n/2$.
Similarly, if  
$\dist(K_2, v_*) < n/2$ then 
$\dist(K_1, v_*) \leq k-n/2$.
This argument proves that
\begin{align*}
&\{ |\dist(K_1, K_2) - k | \geq n\} \\
&\quad \subset \left\{ \min_{i=1,2} \dist(K_i, v_*) \geq n/2\right \}
\cup \left\{ \max_{i=1,2} \dist(K_i, v_*) \leq k-n/2\right\}.
\end{align*}
It follows from the last formula, \eqref{j1.5} and part (b) of the theorem that
 for every $d\geq 3$ there exist $c_4$ and $c_5$ such that for $k\geq 2$ and $n\geq 2 \sqrt{8k} $,
\begin{align*}
\P_2&\left(|\dist(K_1, K_2) - k | \geq n\right) 
\leq 8 \exp(- (n/4) \log (d-1))
+ c_4 d^{-n/2}\\
& \leq  8 \exp(- (n/4) \log (d-1))
+ c_4 \exp(-(n/2)\log d)\\
& \leq  c_5 \exp(- (\sqrt{k}/64) \log d).
\end{align*}
\end{proof}

\section{Acknowledgments}

We are grateful to 
Omer Angel, J\'er\'emie Bettinelli, Sara Billey, Shuntao Chen, Ivan Corwin,
Ted Cox, Nicolas Curien, Michael Damron,  Dmitri Drusvyatskiy, Rick Durrett, Martin Hairer, Christopher Hoffman, Lerna Pehlivan,
Doug Rizzolo, Bruce Sagan, Timo Seppalainen, Alexandre Stauffer, Wendelin Werner and Brent Werness
for the most helpful advice.
The first author is grateful to the Isaac Newton Institute for Mathematical Sciences, where this research was partly carried out, for the hospitality and support.

\bibliographystyle{alpha}
\bibliography{isolated}

\end{document}